\def\ep{{\varepsilon}}
\def\R{\mathbb R}
\def \truc{\frac{1}{\left(A(1+\eta)\right)^{1/\eta}}}
\newtheorem{theo}{\textbf{Theorem}}[section]
\newtheorem{lem}[theo]{\textbf{Lemma}}
\newtheorem{prop}[theo]{\textbf{Proposition}}
\newtheorem{cla}[theo]{\textbf{Claim}}
\newtheorem{assumption}[theo]{\textbf{Assumption}}
\newtheorem{rem}[theo]{\textbf{Remark}}
\title{When fast diffusion and reactive growth both induce accelerating invasions}
\date{}
\begin{document}

\maketitle

\begin{center}
{\large\bf Matthieu Alfaro \footnote{IMAG, Univ. Montpellier, CNRS, Montpellier, France. E-mail: matthieu.alfaro@umontpellier.fr} and Thomas Giletti \footnote{IECL, Universit\'{e} de Lorraine, B.P. 70239, 54506
Vandoeuvre-l\`{e}s-Nancy Cedex, France.\\
 E-mail:
thomas.giletti@univ-lorraine.fr}.} \\
[2ex]
\end{center}




\vspace{10pt}

\begin{abstract} We focus on the spreading properties of solutions of monostable equations with fast diffusion. The nonlinear reaction term involves a weak Allee effect, which tends to slow down the propagation. We complete the picture of \cite{Alf-Gil-17} by studying the subtle case where acceleration does occur and is induced by a combination of fast diffusion and of reactive growth. This requires the construction of new elaborate sub and supersolutions thanks to some underlying self-similar solutions.
 \\

\noindent{\underline{Key Words:} reaction-diffusion equations, spreading properties, fast diffusion, self-similar solutions, acceleration.}\\

\noindent{\underline{AMS Subject Classifications:} 35K65, 35K67, 35B40, 92D25.}
\end{abstract}

\maketitle

\section{Introduction} \label{s:intro}

In this paper, companion of \cite{Alf-Gil-17}, we are concerned with the {\it spreading properties}
of $u(t,x)$ the solution of the nonlinear monostable reaction-diffusion
equation
\begin{equation}\label{eq}
\partial _t u=\partial _{xx}(u^{m})+f(u),\quad t>0,\, x\in \R,
\end{equation}
in the {\it fast diffusion} regime $0<m<1$, the {\it linear diffusion} case $m=1$ and the  {\it porous medium diffusion} regime $m>1$ being already well understood. The typical nonlinearity $f$ we have in mind is $f(s)=rs^{\beta}(1-s)$, with $r>0$ and $\beta>1$ (Allee effect), the Fisher-KPP case $\beta=1$ being already well understood.  Equation \eqref{eq} is supplemented with a nonnegative initial data which is {\it front-like} and may have a {\it heavy
tail}, say $u_0(x)\lesssim  \frac 1{x^{\alpha}}$ for some $\alpha >0$ as $x\to+\infty$ (see Assumption~\ref{ass:initial}). 

Precisely, our goal is to understand the regime 
\begin{equation}
\label{range}
0<m<1, \quad \beta >1, \quad m+\frac{2}{\alpha}\leq \beta <2-m.
\end{equation}
In \cite{Alf-Gil-17}, we proved that, in this regime, propagation occurs by accelerating but a precise 
estimate of the position of the level sets of $u(t,\cdot)$ as $t\to +\infty$ was missing. In the present paper, we fill
this gap by constructing very refined sub and supersolutions, which rely on self-similar solutions of
\begin{equation}\label{eq-lin}
\partial _t u=\partial _{xx}(u^{m})+r u^{\beta}.
\end{equation}
In particular, and roughly speaking, we show that the leading term of the position of the level sets is of the monomial type $t^{\frac{\beta -m}{2(\beta-1)}}$, which is independent on $\alpha$, thus on the tail of the initial data. This is in contrast with the other regimes fully described in \cite{Alf-Gil-17}.

\medskip

We refer to the introduction in \cite{Alf-Gil-17} for references, comments and relevance in population dynamics models on the three main effects inserted in the Cauchy problem \eqref{eq}: nonlinear diffusion \cite{Kin-Mac-03}, \cite{Vaz-book2}, \cite{Aud-Vaz-17}, Allee effect \cite{Cou-08} (vs KPP nonlinearities \cite{Fis-37}, \cite{Kol-Pet-Pis-37}), tail of the initial data. Let us briefly recall the available results on the propagation of solutions to \eqref{eq}, with a front-like initial data (whose behaviour at $+\infty$ is of crucial importance on the speed of invasion).

$\bullet$ In the KPP situation ($\beta=1$): the linear diffusion ($m=1$) case was studied by Hamel and Roques \cite{Ham-Roq-10}, revealing in particular that an algebraic initial tail implies that the level sets of $u(t,\cdot)$ travel exponentially fast as $t\to+\infty$. See also \cite{Gar-11}, \cite{Cab-Roq-13}, \cite{Gui-Hua-preprint}, \cite{Alf-Cov-preprint}. The nonlinear diffusion cases ($m>1$ or $0<m<1$) were recently solved in \cite{Alf-Gil-17}, revealing similar results, except in the fast diffusion range $0<m<1-\frac{2}{\alpha}$ which yields a slightly  stronger acceleration.

$\bullet$ In presence of an Allee effect ($\beta>1$): the linear diffusion ($m=1$) case was studied in \cite{Alf-tails}. For algebraic tails, the exact
separation between acceleration or not (depending on the strength of the Allee effect) was obtained: when $\beta<1+\frac 1 \alpha$ acceleration occurs,  and the location of the level sets of the solution is of the monomial type $t^{\frac{1}{\alpha(\beta-1)}}$ as $t\to+\infty$. The nonlinear diffusion cases were recently studied in \cite{Alf-Gil-17}: in the porous medium diffusion case $m>1$, the obtained results were sharp and very similar to the case $m=1$. On the other hand, because of the possible acceleration induced by fast diffusion itself, the case $0<m<1$ is much more subtle. We proved in~\cite{Alf-Gil-17} that acceleration occurs if and only if $\beta<\max\left(1+\frac 1 \alpha,2-m\right)$. Next, in the range
\begin{equation*}
\label{range-deja-ok}
\beta<\min\left(1+\frac 1\alpha,m+\frac 2 \alpha\right),
\end{equation*}
we precisely estimated the position of the level sets, again of the monomial type $t^{\frac{1}{\alpha(\beta-1)}}$. The keystone for constructing accurate sub and supersolutions was  the solution $w(t,x)$ of the ODE Cauchy problem ($x$ playing the role of a parameter)
\begin{equation}
\label{edo-w-1}
\partial _t w(t,x)=r w^\beta (t,x),\quad w(0,x)=u_0(x).
\end{equation}
On the other hand, in the remaining parameter range, which rewrites as \eqref{range}, the acceleration is induced by a combination of fast diffusion and of reactive growth. Since \eqref{edo-w-1} neglects the former, it was not enough to precisely quantify the acceleration phenomena. The main novelty of the present paper is to use a self-similar solution of \eqref{eq-lin} to build accurate sub and supersolutions, which enable to understand the acceleration regime \eqref{range} that was missing in \cite{Alf-Gil-17}.

\medskip

Through this work, we make the following assumption on the initial condition. 

\begin{assumption}[Initial condition]\label{ass:initial} The initial condition $u_0:\R\to \left[0,1\right]$ is uniformly continuous and asymptotically front-like, in the sense that
\begin{equation*}
\label{front-like}
\liminf _{x\to -\infty} u_0(x)>0,
\end{equation*}
and
\begin{equation}
\label{algebraic-acc-FDE}
 u_0(x)\leq \frac{\overline C}{x^\alpha},\quad \forall x\geq x_0,
\end{equation}
for some $\alpha>0$, $\overline C>0$ and $x_0>1$.
\end{assumption}

Notice that no lower bound is required in \eqref{algebraic-acc-FDE}. In particular $u_0\equiv 0$ on $(x_0,+\infty)$ is allowed. The reason is that the fast diffusion equation makes the tail of the solution (at least) algebraically heavy at any positive time: at time $T>0$, there is $C(T)>0$ such that  $u(T,x)\geq \frac{C(T)}{x^{\frac{2}{1-m}}}$ for $x$ large enough, and moreover $C(T)\to+\infty$ as $T\to +\infty$. This was proved by  Herrero and Pierre~\cite{Her-Pie-85}, and will be used in subsection \ref{ss:lower-kpp}.

As far as the nonlinearity $f$ is concerned, we assume the following.

\begin{assumption}[Monostable nonlinearity with Allee effect]\label{ass:f} The nonlinearity $f:\left[0,1 \right]\to \R$ is of the class~$C^{1}$, and is of the monostable type, in the sense that
$$
f(0)=f(1)=0, \quad f>0 \; \text{ in } (0,1).
$$
Moreover there are $\beta>1$, $r>0$, $\overline r>0$, and $s_0\in(0,1)$ such that
\begin{equation} 
\label{nonlinearity-acc-FDE}
f(s)\geq r s^\beta  ,\quad \forall 0\leq s\leq s_0,
\end{equation}
and
\begin{equation}
\label{nonlinearity-accbis-FDE}
f(s)\leq \overline r s^\beta,\quad \forall 0\leq s\leq 1.
\end{equation}
\end{assumption}

In the sequel, we always denote by $u(t,x)$ the solution of \eqref{eq} with initial condition $u_0$. From the above assumptions and the comparison principle, one gets $0\leq u(t,x)\leq 1$ and even
\begin{equation*}\label{strict}
0<u(t,x)<1, \quad \forall (t,x)\in(0,+\infty)\times\R.
\end{equation*}
Also, since the initial data is front-like, the state $u\equiv 1$ does invade the whole line $\R$ as $t\to+\infty$: there is $c _0>0$ such that 
\begin{equation}
\label{invasion}
\lim _{t\to +\infty} \inf _{x\leq c _0 t} u(t,x)=1,
\end{equation}
meaning  that propagation is at least linear. We also have 
\begin{equation}
\label{zero-a-droite}
\lim _{x\to+\infty}u(t,x)=0,\quad \forall t\geq 0.
\end{equation}
We refer to \cite{Alf-Gil-17} for proofs of those basic facts. 

In order to state our results we define, for any $\lambda \in (0,1)$ and $t\geq 0$,
$$
E_\lambda(t):=\{x\in\R:\, u(t,x)=\lambda\}
$$
the $\lambda$-level set of $u(t,\cdot)$. In view of \eqref{invasion} and \eqref{zero-a-droite}, for any $\lambda\in(0,1)$, there is a time $t_\lambda >0$ such that
\begin{equation*}
\label{nonvide}
\emptyset \neq E_\lambda(t) \subset (c _0 t, +\infty), \quad \forall t\geq t_\lambda.
\end{equation*}
Our main result is the following.

\begin{theo}[Localization of the accelerating level sets in the range \eqref{range}]\label{th:main} Let $m \in (0, 1)$, $\alpha >\frac{1}{1-m}$ (from Assumption \ref{ass:initial}),  and $\beta>1$ (from Assumption \ref{ass:f}) be such that
\begin{equation}\label{alpha-beta-acc-FDE}
m+\frac{2}{\alpha}\leq \beta <2-m.
\end{equation}
Then, for any $\lambda \in(0,1)$, any small $\ep>0$,  there is a
time $T_{\lambda,\ep}\geq t_\lambda$ such that
\begin{equation}
\label{levelset-FDE}
E_\lambda(t)\subset (x^-(t),x^{+}(t)),\quad \forall t\geq  T _{\lambda,\ep},
\end{equation}
where 
\begin{equation}
\label{x-m-p}
 x^-(t):=(1-\ep)z_0 t^{\frac{\beta-m}{2(\beta-1)}}, \quad x^+(t):=\left(\overline r\overline C^{\beta-1}(\beta-1)t\right)^{\frac{\beta-m+\ep }{2(\beta-1)}},
 \end{equation}
 where $z_0>0$ is a constant depending on $m$, $\beta$, $r$ and $\ep$ (the initial data is irrelevant for the lower bound).

 If we assume furthermore that $\alpha\geq \frac{2}{1-m}$ then the upper bound is sharply improved to 
 \begin{equation}
 \label{improved}
 x^+(t):=(1+\ep)\overline{z_0} t^{\frac{\beta-m}{2(\beta-1)}},
 \end{equation}
 where $\overline{z_0}>0$ is a constant depending on $m$, $\beta$ and $\overline r$ (but not on $\ep$), and not on $\alpha$ nor $\overline C$ (the initial data is irrelevant for the upper bound).
\end{theo}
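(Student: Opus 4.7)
The plan is to construct sub- and supersolutions out of self-similar solutions of the auxiliary equation \eqref{eq-lin}. Since $f(u)\geq r u^\beta$ for $u\leq s_0$ (resp.\ $f(u)\leq \overline r u^\beta$ on $[0,1]$), any small subsolution (resp.\ any supersolution) of \eqref{eq-lin} with rate $r$ (resp.\ $\overline r$) is a sub- (resp.\ super-)solution of \eqref{eq}. The crucial observation is that \eqref{eq-lin} is invariant under the scaling $w(t,x)\mapsto \lambda w(\lambda^{\beta-1}t,\lambda^{(\beta-m)/2}x)$, which already singles out the spreading exponent $\sigma=\tfrac{\beta-m}{2(\beta-1)}$. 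I accordingly look for self-similar solutions
\[
w(t,x)=(\tau\pm t)^{\mp 1/(\beta-1)}\,\Phi\!\bigl(x(\tau\pm t)^{-\sigma}\bigr),
\]
which reduce \eqref{eq-lin} to a profile ODE of the form $(\Phi^m)''\pm\sigma\xi\Phi'\pm\tfrac{1}{\beta-1}\Phi+r\Phi^\beta=0$. The first step would be a phase-plane/shooting analysis of this ODE to establish the existence of the relevant positive profile $\Phi$, together with its qualitative behavior (monotonicity, support or algebraic decay $\sim\xi^{-2/(\beta-m)}$ forced by the transport balance $\sigma\xi\Phi'\sim \Phi/(\beta-1)$) and its continuity in the parameters. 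The range \eqref{range} is precisely where these profiles carry the correct spreading information.

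For the lower bound I would combine the invasion property \eqref{invasion} and the Herrero-Pierre tail estimate recalled after Assumption~\ref{ass:initial} to deduce that, at any sufficiently large time $T_0$, $u(T_0,\cdot)$ dominates a plateau of height $\lambda_0$ on $(-\infty,c_0T_0/2]$ plus a heavy tail $\sim C(T_0)x^{-2/(1-m)}$ for $x>0$, with $C(T_0)\to+\infty$. A subsolution tailored from the self-similar solution above—possibly truncated so that its amplitude stays below $s_0$ and its support/tail fits under the plateau plus Herrero-Pierre tail—then sits below $u(T_0,\cdot)$. The comparison principle yields $u\geq\underline u$ on $[T_0,+\infty)$, and the $\lambda$-level set of $\underline u$ is explicit and gives $x^-(t)=(1-\ep)z_0t^\sigma$. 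A standard bootstrap using \eqref{invasion} on shifted frames extends the estimate from $\lambda\leq s_0$ to arbitrary $\lambda\in(0,1)$. The constant $z_0$ is determined purely by the profile $\Phi$, which explains its independence of the initial data.

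For the upper bound I would dually build a supersolution of \eqref{eq-lin} with rate $\overline r$ dominating $u_0$. When $\alpha\geq\tfrac{2}{1-m}$, the initial tail $\overline C/x^\alpha$ is lighter than the critical fast-diffusion tail $x^{-2/(1-m)}$ and fits comfortably below the self-similar profile tail $x^{-2/(\beta-m)}$; a pure self-similar supersolution then dominates $u_0$ up to constants and yields the sharp bound \eqref{improved} with $\overline{z_0}$ depending only on $m,\beta,\overline r$. In the general case of \eqref{alpha-beta-acc-FDE}, the initial tail may be too heavy for such a direct domination: I would inflate the time shift to the ODE blow-up time $\sim \overline r\overline C^{\beta-1}(\beta-1)t$ associated to $w'=\overline r w^\beta$ with data $\overline C/x^\alpha$—exactly the factor appearing in \eqref{x-m-p}—and allow an $\ep$-loss in the exponent to recover \eqref{x-m-p}.

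The main obstacle is the combined ODE analysis and tail-matching step. The profile ODE has mixed-sign coefficients and is degenerate where $\Phi$ vanishes, so precise asymptotic estimates are required. Even more delicately, the matching step forces us to compare, at a single time, the algebraic decay of $\Phi$ against either the Herrero-Pierre tail $x^{-2/(1-m)}$ (lower bound) or $\overline C/x^\alpha$ (upper bound); the threshold conditions $m+2/\alpha\leq\beta<2-m$ and $\alpha\geq 2/(1-m)$ in the statement are precisely the regimes in which these tail comparisons succeed.
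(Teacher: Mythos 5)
Your overall strategy --- self-similar profiles of $\partial_t u = \partial_{xx}(u^m) + ru^\beta$ as sub/supersolutions, with tail matching against the Herrero--Pierre bound and the initial data --- is the one the paper follows, but two steps you gloss over are exactly where your sketch would break. The decay you attribute to the profile, $\Phi(\xi)\sim\xi^{-2/(\beta-m)}$ coming from the pure transport balance, is the \emph{generic} decay of the profile ODE, but it translates into a spatial tail $w(t,x)\sim x^{-2/(\beta-m)}$ which is strictly heavier than the Herrero--Pierre lower bound $u(T,\cdot)\gtrsim C(T)x^{-2/(1-m)}$ (since $\beta>1$ forces $2/(\beta-m)<2/(1-m)$), so a subsolution with that tail cannot be slid under $u(T,\cdot)$ and the lower-bound argument fails. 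What the paper actually constructs in Section~\ref{s:construction} is the profile with the nongeneric, diffusion-balanced decay $\varphi(z)\sim z^{-2/(1-m)}$, giving $w(t,x)\sim t^{1/(1-m)}/x^{2/(1-m)}$, which does match Herrero--Pierre. Producing a solution that blows up at some $z_0$ with rate $(z-z_0)^{-1/(\beta-1)}$ and simultaneously has this fast decay at $+\infty$ is a genuine two-sided shooting/matching problem (the bulk of the construction: small $z_0$ gives decay faster than $z^{-2/(1-m)}$, large $z_0$ gives slower decay, and only at a critical $z_0=z^*$ do both asymptotics hold). This matching is the main technical content of the paper and is entirely absent from your sketch, which treats the existence of the profile as a routine shooting problem.

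Second, for the lower bound, merely truncating the self-similar solution at a height $\leq s_0$ does not yield a subsolution of \eqref{eq}: $\min(w, s_0)$ has a concave kink at the junction, so $-\partial_{xx}(\cdot^m)$ develops a positive singular measure there and the subsolution inequality breaks down. The paper's Lemma~\ref{lem:sub-kpp} instead takes $v = w(1-Aw^\eta)$ to the right of a moving junction point $X(t)$ and the constant $\max_{w\geq 0} w(1-Aw^\eta)$ to its left: the two pieces glue in a $C^1$ fashion across $X(t)$, a suitable choice of $A$ keeps $v\leq s_0$, and --- crucially --- the correction factor $(1-Aw^\eta)$ together with the deliberate $\varepsilon$-loss in the reaction rate (the self-similar $w$ is built with $r\leftarrow r-\varepsilon$) is what makes $\mathcal L v\leq 0$ hold, separately in the blow-up zone and the far-away zone. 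Without this extra structure the subsolution step of your proposal does not close.
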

\begin{rem}\label{rem:up1}
As a matter of fact, the upper estimate \eqref{improved} can be extended (up to changing the multiplicative constant) to the regime $m+\frac 2 \alpha \leq \beta <2-m$, $\frac{1}{1-m}<\alpha<\frac{2}{1-m}$. The proof is rather similar and simply relies on a different choice of a self-similar solution with a slower decay at infinity. To make the presentation simpler, we treat this case separately and only sketch the necessary changes in Remarks~\ref{rem:up2} and~\ref{rem:up3}. We point out that the multiplicative constant in \eqref{improved} may still be chosen independently of the initial data as long as $m+\frac 2\alpha < \beta$.
\end{rem}

\medskip

The paper is organized as follows. In Section \ref{s:self}, we state the existence of an adequate self-similar solution of equation \eqref{eq-lin}, which is the main tool for construction of sharp sub and supersolutions. The latter is achieved in Section \ref{s:proof}, thus proving Theorem \ref{th:main}. Last, the actual construction of the self-similar solution is performed in Section \ref{s:construction}.

\section{A key self-similar solution}\label{s:self}

Guided by \cite{Kin-Mac-03}, we plug the self-similar ansatz
\begin{equation}
\label{def:self}
w(t,x):=\frac{1}{t^{\frac{1}{\beta-1}}}\varphi\left(\frac{x}{t^{\frac{\beta-m}{2(\beta-1)}}}\right)
\end{equation}
into 
\begin{equation}
\label{eq-lin1}
\partial _t u =\partial _{xx} (u^m)+ru^\beta,
\end{equation}
and obtain after some straightforward computations that one needs
\begin{equation}
\label{eq-self-similar}
-\frac{1}{\beta-1}\left(\varphi(z)+\frac{\beta-m}{2}z\varphi'(z)\right)=(\varphi ^m)''(z)+r\varphi^\beta(z).
\end{equation}
In this section, we claim the existence of such a self-similar solution having the required asymptotics properties for our analysis to work in Section \ref{s:proof}. The proofs are postponed to Section \ref{s:construction}.
\medskip 

In the range \eqref{alpha-beta-acc-FDE} of Theorem \ref{th:main}, there are $z_0>0$, $C_0>0$, $C_\infty>0$ such that the following holds: for any $k_0 <C_0< K_0$, any $0<k_\infty <C_\infty$ and some $K_\infty >C_\infty$, there is a decreasing function $\varphi:(z_0,+\infty)\to(0,+\infty)$ solving \eqref{eq-self-similar} on $(z_0,+\infty)$ and satisfying the following boundary estimates
\begin{equation}
\label{self-z0}
\frac{k_0}{(z-z_0)^{\frac{1}{\beta-1}}} \leq \varphi(z) \leq  \frac{K_0}{(z-z_0)^{\frac{1}{\beta-1}}},\quad \text{ in a right neighborhood of   $z_0$, say $(z_0,z_1)$},
\end{equation}
and
\begin{equation}
\label{self-infini}
\frac{k_\infty}{z^{\frac{2}{1-m}}}\leq \varphi(z) \leq  \frac{K_\infty}{z^{\frac{2}{1-m}}},\quad \text{ in a neighborhood of $+\infty$}.
\end{equation}

Hence we are equipped with \eqref{def:self} solving \eqref{eq-lin1} in the domain $t>0$, $x>z_0 t^{\frac{\beta-m}{2(\beta-1)}}$. For later use, we use the convention
\begin{equation}
\label{convention}
w(t,x)=+\infty, \quad t>0,\, x\leq z_0 t^{\frac{\beta-m}{2(\beta-1)}}.
\end{equation}

\medskip

\noindent {\bf The blow-up zone.}  We will also need the following estimates: up to reducing $z_1$, we have that
\begin{equation}\label{to-be-proved-2abis}
- \frac{k_0^{1-\beta}}{\beta-1} \varphi^\beta (z) < \varphi '(z)  < - \frac{K_0^{1-\beta}}{\beta-1} \varphi^\beta (z)\quad \text{ in } (z_0,z_1).
\end{equation}
Also, for any $\delta >0$, if $\max(K_0-C_0,C_0-k_0)$ is sufficiently small then, up to reducing $z_1$, we have that
\begin{equation}\label{to-be-proved-2b} \vert\varphi''(z)\vert \leq \delta  \varphi^{\beta +1 -m}(z) \quad \text{ in } (z_0,z_1).
\end{equation}
The two above estimates will also be proved in Section \ref{s:construction}.

Going back to $w(t,x)$ this transfers into the following: there is $\gamma >0$ so that
\begin{equation}
\label{self-infini-5}
 - \partial_x w (t,x) =  \vert \partial _x w(t,x)\vert \leq \frac{\gamma}{t^{\frac{2-m-\beta}{2(\beta-1)}}}w^{\beta}(t,x), \quad  \vert \partial_{xx} w (t,x)\vert \leq  \delta  w^{\beta+1-m}(t,x),
 \end{equation}
 and
 \begin{equation}
 \label{self-infini-6}\vert \partial _x (w^{m})(t,x)\vert \leq \frac{\gamma}{t^{\frac{2-m-\beta}{2(\beta-1)}}}w^{\beta+m-1}(t,x), \quad \vert \partial _{xx} (w^{m})(t,x)\vert \leq \frac{\gamma}{t^{\frac{2-m - \beta}{\beta-1}}} w^{2\beta + m - 2} + \delta   w^{\beta}(t,x),
\end{equation}
for all  $t>0$ and  $z_0t^{\frac{\beta-m}{2(\beta-1)}} < x < z_1 t^{\frac{\beta-m}{2(\beta-1)}}$.

\medskip

\noindent {\bf The far away zone, or tail.}  From \eqref{self-infini}, we have that
\begin{equation}
\label{self-infini-2}
 \frac{ k_\infty \, t^{\frac{1}{1-m}}}{x^{\frac{2}{1-m}}}\leq w(t,x)\leq \frac{K_\infty\, t^{\frac{1}{1-m}}}{x^{\frac{2}{1-m}}}, 
\end{equation}
for all $t>0$ and  $x  t^{- \frac{\beta-m}{2(\beta-1)}}$ large enough.

Again, we will also need estimates on the derivatives of $\varphi $: there are $K'>0$ and $K''>0$ such that, for $z$ large enough,
\begin{equation}
\label{to-be-proved}
-K'\varphi ^{\frac{3-m}{2}}(z)\leq \varphi '(z)<0,\quad \vert \varphi''(z)\vert \leq K'' \varphi ^{2-m}(z),
\end{equation}
which will also be proved in Section \ref{s:construction} (notice that the estimate on the second derivative actually simply follows from the ODE and previous estimates on $\varphi$ and $\varphi '$ as $z \to +\infty$). Due to the positivity of $\varphi$ and $-\varphi'$, up to changing the positive constants $K'$ and $K''$, we can also assume that \eqref{to-be-proved} holds in the whole interval $[z_1, + \infty)$.

Going back to $w(t,x)$ this transfers into the following: up to enlarging $\gamma >0$,
\begin{equation}
\label{self-infini-3}
 -\partial _x w(t,x)=\vert \partial _x w(t,x)\vert \leq \frac{\gamma}{t^{\frac 12}}w^{\frac{3-m}{2}}(t,x), \quad  \vert \partial_{xx} w (t,x)\vert \leq \frac{\gamma}{t}w^{2-m}(t,x),
 \end{equation}
 and
 \begin{equation}
 \label{self-infini-4}
 \vert \partial _x (w^{m})(t,x)\vert \leq \frac{\gamma}{t^{\frac 12}}w^{\frac{m+1}{2}}(t,x), \quad  \vert \partial _{xx} (w^{m})(t,x)\vert \leq \frac{\gamma}{t}w(t,x),
\end{equation}
for all  $t>0$ and  $x \ge z_1 t^{\frac{\beta-m}{2(\beta-1)}}$.

\section{Proof of the main result}\label{s:proof}

\subsection{Lower bound on the level sets in  \eqref{levelset-FDE}}\label{ss:lower-kpp}

Thanks to the self-similar solution of Section \ref{s:self}, we are in the position to construct an accurate subsolution to \eqref{eq}, whose level sets travel like $x^{-}(t)$ appearing in \eqref{x-m-p}. We start with some preparation. 

\medskip

Let $\ep >0$ small be given.  We fix a large enough $\eta>0$ so that
\begin{equation}
\label{def:eta}
\eta  > 1,\quad \frac{r\beta}{1+\eta}<r-\ep,
\end{equation}
and a large enough  $A>0$  so that
 \begin{equation}
 \label{def:A}
\frac{1}{\left(A(1+\eta)\right)^{1/\eta}} \leq s_0,
\end{equation}
where $s_0>0$ is as in \eqref{nonlinearity-acc-FDE}. We also select $\delta>0$ small enough so that
\begin{equation}
\label{eq:delta}
A(3+3\eta)\delta \leq \frac \ep 2.
\end{equation}

We define
\begin{equation}
\label{def:self-2}
w(t,x):=\frac{1}{t^{\frac{1}{\beta-1}}}\varphi\left(\frac{x}{t^{\frac{\beta-m}{2(\beta-1)}}}\right)
\end{equation}
 as the self-similar solution \eqref{def:self} of Section \ref{s:self}, solving \eqref{eq-lin1} with $r\leftarrow r-\ep$. Moreover, we choose the constants $k_0$ and $K_0$ in \eqref{self-z0} close enough to $C_0$ so that \eqref{to-be-proved-2b} holds, and so do all estimates of Section \ref{s:self}.

Now, for any $t >0$, from $\varphi(z_0)=+\infty$, $\varphi(+\infty)=0$ and the monotonicity of $\varphi$, we can define
 \begin{eqnarray*}
X(t)&:=&\sup\left\{x>z_0 t^{\frac{\beta-m}{2(\beta-1)}}: w(t,x)= \frac{1}{\left(A(1+\eta)\right)^{1/\eta}} \right\}\\
&=& \sup\left\{x>z_0 t^{\frac{\beta-m}{2(\beta-1)}}: \varphi \left(\frac{x}{t^{\frac{\beta-m}{2(\beta-1)}}}\right)=\frac{t^{\frac{1}{\beta-1}}}{\left(A(1+\eta)\right)^{1/\eta}} \right\},
\end{eqnarray*}
so that

\begin{equation}
\label{consequence}
w(t,X(t)) =\truc, \quad \text{ and }\, w(t,x) < \truc\; \text{ for all }  x > X(t).
\end{equation}
Notice that from expression \eqref{def:self-2} we clearly have $X(t) < z_1t^{\frac{\beta-m}{2(\beta-1)}}$ for $t>0$ large enough.

\begin{lem}[An accelerating subsolution]
\label{lem:sub-kpp} Let the assumptions of Theorem~\ref{th:main} hold.  Define
\begin{equation*}
\label{def-sub-kpp}
v(t,x):=
\begin{cases}
\truc\frac{\eta}{1+\eta} &\mbox{ if } x\leq X(t)\vspace{5pt}\\
w(t,x)\left(1-A w^\eta (t,x)\right) &\mbox{ if } x>X(t).
\end{cases}
\end{equation*}
Then  there is $T>0$ large enough so that $v(t,x)$ is a subsolution to \eqref{eq} in the domain $(T,+\infty)\times \R$.
\end{lem}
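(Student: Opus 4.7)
The plan is to check that $v$ satisfies $\partial_t v - \partial_{xx}(v^m) - f(v) \leq 0$ classically in each of the two open regions separated by the interface $\{x = X(t)\}$, with enough regularity across this interface for the distributional/comparison-principle argument. Writing $\psi(s) := s(1 - A s^\eta)$, we have $v = \psi(w)$ on the right region and $v \equiv c := \truc \cdot \eta/(1+\eta) = \psi\bigl(\truc\bigr)$ on the left. Since $\truc$ is the unique maximizer of $\psi$ and by definition of $X(t)$ we have $w(t,X(t)) = \truc$, it follows that $\psi'(w(t,X(t))) = 0$; hence $v$, $\partial_t v$ and $\partial_x v$ are all continuous across $\{x=X(t)\}$ (the last two vanishing there), and in particular $\partial_x(v^m) = m v^{m-1}\partial_x v$ is continuous as well, so no $\delta$-measure is produced by differentiation. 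It is therefore enough to verify the subsolution inequality in each open region.

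On the left region $v \equiv c$ is constant in both variables, so $\partial_t v = \partial_{xx}(v^m) = 0$, and since $c \leq \truc \leq s_0$ by \eqref{def:A}, Assumption~\ref{ass:f} gives $f(c) > 0$, making the inequality trivial. On the right region, set $y := A w^\eta \in (0, 1/(1+\eta))$; using $\partial_t w = \partial_{xx}(w^m) + (r-\ep)w^\beta$ together with the chain-rule expansion $\partial_{xx}(v^m) = \Phi''(w)(\partial_x w)^2 + \Phi'(w) \partial_{xx} w$ (where $\Phi(s) := \psi(s)^m$), one arrives after some algebra at
\begin{equation*}
\partial_t v - \partial_{xx}(v^m) - r v^\beta = \bigl[(1-(1+\eta)y)(r-\ep) - r(1-y)^\beta\bigr] w^\beta + \mathcal{E}(t,x),
\end{equation*}
where $\mathcal{E}$ gathers a term proportional to $[1 - (1-y)^{m-1}] \partial_{xx}(w^m)$ and a remainder of order $A w^{\eta+m-2}(\partial_x w)^2$ (the $m(m-1)w^{m-2}(\partial_x w)^2$ contribution in $\partial_{xx}(v^m)$ cancels against its counterpart coming from eliminating $\partial_{xx} w$ via the expression of $\partial_{xx}(w^m)$). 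Since $v \leq c \leq s_0$, \eqref{nonlinearity-acc-FDE} yields $f(v) \geq r v^\beta$, so it suffices to show the right-hand side above is nonpositive. The algebraic bracket equals $-\ep$ at $y=0$, and its derivative $-(1+\eta)(r-\ep) + r\beta(1-y)^{\beta-1}$ is strictly negative throughout $[0,1/(1+\eta)]$ thanks to \eqref{def:eta} combined with the monotonicity of $(1-y)^{\beta-1}$; hence the bracket is $\leq -\ep$, contributing $\leq -\ep w^\beta$, and the whole task reduces to proving $\mathcal{E}(t,x) \leq \ep w^\beta$ for $t \geq T$ large.

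The hard part is controlling $\mathcal{E}$ uniformly. We split the right region into the blow-up zone $X(t) < x < z_1 t^{(\beta-m)/(2(\beta-1))}$ and the far-away zone $x \geq z_1 t^{(\beta-m)/(2(\beta-1))}$, applying in each the corresponding estimates from Section~\ref{s:self}. On the blow-up zone, \eqref{self-infini-5}-\eqref{self-infini-6} bound all second-order derivatives by a $\delta \, w^\beta$ piece plus terms with time-decay prefactors $t^{-(2-m-\beta)/(\beta-1)}$ or $t^{-(2-m-\beta)/(2(\beta-1))}$. Tracking the three distinct sources of $\delta w^\beta$-type contributions in $\mathcal{E}$ (from $\partial_{xx}(w^m)$ directly, from $\partial_{xx} w$ inside $\Phi'(w)\partial_{xx} w$, and from the cross term in $\Phi''(w)(\partial_x w)^2$ after cancellation), each carrying a prefactor of order $A(1+\eta)$, we obtain a total contribution of at most $A(3+3\eta)\delta w^\beta \leq (\ep/2) w^\beta$ by \eqref{eq:delta}; the remaining terms vanish uniformly as $t \to \infty$, so choosing $T$ large absorbs them into another $(\ep/2)w^\beta$. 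On the far-away zone, the bounds \eqref{self-infini-3}-\eqref{self-infini-4} all carry explicit prefactors $t^{-1/2}$ or $t^{-1}$, so $\mathcal{E}(t,x) \leq \ep w^\beta$ follows directly for $T$ large. I expect this careful bookkeeping of the cancellations and error terms in the chain-rule expansion of $\partial_{xx}(v^m)$ to be the main obstacle of the proof.
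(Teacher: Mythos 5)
Your proposal is correct and follows essentially the same route as the paper: split the domain at $X(t)$, note the $C^1$ matching there, reduce to $f(v)\geq r v^\beta$, extract a leading $-\ep w^\beta$ from the reaction/time-derivative terms using \eqref{def:eta}, and then split the right region into the blow-up and far-away zones to absorb the second-order error terms via \eqref{self-infini-5}--\eqref{self-infini-6} and \eqref{self-infini-3}--\eqref{self-infini-4} together with \eqref{eq:delta}. The only differences are cosmetic bookkeeping: you expand $\partial_{xx}(v^m)$ by the chain rule on $\Phi(s)=\psi(s)^m$ and eliminate $\partial_{xx}w$ (producing a coefficient $(1-(1+\eta)y)\bigl[1-(1-y)^{m-1}\bigr]$ on $\partial_{xx}(w^m)$), whereas the paper uses the product rule on $w^m(1-Aw^\eta)^m$ (producing a coefficient $1-A(1+\eta)w^\eta-(1-Aw^\eta)^m$); and you keep $r(1-y)^\beta w^\beta$ exactly instead of applying the convexity bound $(1-y)^\beta\geq 1-\beta y$. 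Both variants yield the same $O(y)$ cancellation and the same $A(3+3\eta)\delta$-type control, so the outcome is identical; just be aware that in your elimination of $\partial_{xx}w$ there is actually only one $\delta$-source, the $\partial_{xx}(w^m)$ term, not three as you write (the $(\partial_x w)^2$ remainder carries only a time-decaying prefactor), though this does not affect the conclusion.
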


\begin{proof} Let us note that $v$ is smooth in both subdomains $\{ x < X(t)\}$ and $\{ x > X(t) \}$. Also, it is continuous in $(0,+\infty) \times \mathbb{R}$ as well as $C^1$ with respect to $x$ at the junction point $X(t)$. This means that a comparison principle is applicable provided that $v$ satisfies 
\begin{equation}\label{plus1}
\mathcal L v(t,x):=\partial _t v(t,x)-\partial _{xx}(v^m)(t,x)-f(v(t,x))\leq 0
\end{equation}
in both these subdomains
. Since $\truc \frac{\eta}{1+\eta}$ is obviously a subsolution to \eqref{eq}, we only need to check this inequality when $t>T$, $x > X(t)$. First it is straightforward that $v(t,x)\leq \max_{w \geq 0} w (1- Aw^\eta) = \truc \frac{\eta}{1+\eta}\leq s_0$ in view of \eqref{def:A}. It then follows from \eqref{nonlinearity-acc-FDE} and a  convexity inequality that
\begin{equation}
\label{un}
f(v)\geq rw^\beta -rA\beta w^{\beta + \eta }.
\end{equation}
Next we have
\begin{equation}
\label{deux}
\partial _t v=(\partial _t w)\left(1-A(1+\eta)w^\eta\right)=\left(\partial _{xx}(w^m)+(r-\ep) w^\beta\right)\left(1-A(1+\eta)w^\eta\right).
\end{equation}
Also, we have
\begin{eqnarray}
\partial _{xx}(v^{m})&=&\partial _{xx}(w^{m})(1-Aw^{\eta})^{m}+2\partial _x(w^{m})\partial _x((1-Aw^{\eta})^{m})+w^{m}\partial_{xx}((1-Aw^{\eta})^{m})\nonumber \\
&= & \partial _{xx}(w^{m})(1-Aw^{\eta})^{m}-2Am\eta\partial_x(w^{m})(\partial _x w) w^{\eta-1}(1-Aw^{\eta})^{m-1}\nonumber\\
&& -Am\eta(\eta-1)(\partial _x w)^{2}w^{m+\eta-2}(1-Aw^{\eta})^{m-1}\nonumber\\
&& -Am\eta (\partial _{xx} w ) w^{m+\eta-1} (1-Aw^{\eta})^{m-1}\nonumber\\
& & - (1-m) A^{2} m \eta^2 (\partial _x w)^{2} w^{m+2 \eta -2} (1-Aw^\eta)^{m-2}.\label{trois}
\end{eqnarray}
Plugging \eqref{un}, \eqref{deux} and \eqref{trois} into \eqref{plus1} we arrive at
\begin{eqnarray*}
\mathcal L v&=&-\ep w^{\beta} +w^{\beta+\eta}A(r\beta-(r-\ep)(1+\eta))\\
&& +\partial _{xx}(w^{m})\left[1-A(1+\eta)w^{\eta}-(1-Aw^{\eta})^{m}\right]\\
&& \nonumber + 2Am\eta\partial_x(w^{m})(\partial _x w) w^{\eta-1}(1-Aw^{\eta})^{m-1}\\
&& +Am\eta(\eta-1)(\partial _x w)^{2}w^{m+\eta-2}(1-Aw^{\eta})^{m-1}\nonumber\\
&& +Am\eta (\partial _{xx} w )w^{m+\eta-1}(1-Aw^{\eta})^{m-1}\nonumber\\
& & +(1-m) A^{2} m \eta^2 (\partial _x w)^{2} w^{m+2 \eta -2}  (1-Aw^\eta)^{m-2}.
\end{eqnarray*}
The second term in the above right hand side member is nonpositive in view of \eqref{def:eta}. We deduce from \eqref{consequence} that
 $1 \geq 1 - A w^\eta (t,x) \geq \frac{\eta}{1+\eta} \geq \frac{1}{2}$, so that $(1-Aw^{\eta})^{m-1}\leq 2^{1-m}$, $(1-Aw^{\eta})^{m-2}\leq 2^{2-m}$. By the mean value theorem $\vert 1-(1-Aw^{\eta})^{m}\vert \leq 2^{1-m}Am  w^{\eta}$. Hence, 
\begin{eqnarray}
\mathcal L v&\leq &-\ep w^{\beta} \nonumber\\
&& + |\partial _{xx}(w^{m})| A\left(1+\eta+2^{1-m}m\right)w^{\eta}\nonumber\\
&& \nonumber + 2^{2-m}Am\eta\partial_x(w^{m})(\partial _x w) w^{\eta-1}\\
&& +2^{1-m}Am\eta(\eta-1)(\partial _x w)^{2}w^{m+\eta-2}\nonumber\\
&& +2^{1-m}Am\eta \vert \partial _{xx} w \vert w^{m+\eta-1}\nonumber\\
& & +2^{2-m}(1-m) A^2 m \eta^2 (\partial _x w)^{2} w^{m+2 \eta -2} .\label{infer}
\end{eqnarray}
Now we need to estimate the derivatives of $w$ by powers of $w$. To do so, we distinguish two regions.

\noindent {\bf The far away zone $t>0$, $x\geq z_1 t^{\frac{\beta-m}{2(\beta-1)}}$,} where we take advantage of \eqref{self-infini-3} and \eqref{self-infini-4} to infer from~\eqref{infer} that
\begin{eqnarray*}
\mathcal L v&\leq & -\ep w^{\beta} \\
&& +A\left(1+\eta+2^{1-m}m\right)\frac \gamma t w^{\eta+1}\\
&& \nonumber + 2^{2-m}Am\eta\frac{\gamma ^{2}}{t}w^{\eta+1}\\
&& +2^{1-m}Am\eta(\eta-1)\frac{\gamma ^{2}}{t}w^{\eta +1}
\nonumber\\
&& +2^{1-m}Am\eta \frac{\gamma}{t}w^{\eta +1}\nonumber\\
& & +2^{2-m}(1-m) A^2 m \eta^2 \frac{\gamma ^{2}}{t} w^{2 \eta +1}.
\end{eqnarray*}
Hence, for a positive constant $K=K(\eta,m,\gamma)$, we have 
$$
\mathcal L v\leq -\ep w^{\beta}+AK \frac{w^{\eta+1}}t+ A^{2}K \frac{w^{2\eta+1}}t\leq  w^\beta\left(-\ep +AK \frac{1}{t}+A^2 K\frac 1t\right)\,
$$
from $\beta<2-m<2<\eta+1 <2\eta+1$ and the crude estimate $w<1$. Hence for $T=T(\eta,m,\gamma,A,\ep)>0$ large enough we have $\mathcal L v(t,x)\leq 0$ for $t> T$, $x\geq z_1 t^{\frac{\beta-m} {2(\beta-1)}}$, that is after a large time and in the far away zone.

\noindent {\bf The blow-up zone $t>0$, $z_0t^{\frac{\beta-m}{	2(\beta-1)}}< X(t) < x< z_1 t^{\frac{\beta-m}{2(\beta-1)}}$,} where we take advantage of \eqref{self-infini-5} and \eqref{self-infini-6} to infer from \eqref{infer} that
\begin{eqnarray*}
\mathcal L v&\leq & -\ep w^{\beta} \\
&& +A\left(1+\eta+2^{1-m}m\right) \left( \delta  w^{\beta+\eta} +  \frac{\gamma}{t^{\frac{2-m - \beta}{\beta-1}}} w^{ 2 \beta +\eta + m - 2}\right)\\
&& \nonumber + 2^{2-m}Am\eta\frac{\gamma ^{2}}{t^{\frac{2-m-\beta}{\beta-1}}}w^{2\beta+\eta + m-2}\\
&& +2^{1-m}Am\eta(\eta-1)\frac{\gamma ^{2}}{t^{\frac{2-m-\beta}{\beta-1}}}w^{2\beta+\eta +m-2}
\nonumber\\
&& +2^{1-m}Am\eta \delta   w^{\beta+\eta} \nonumber\\
& & +2^{2-m}(1-m) A^2 m \eta^2 \frac{\gamma ^{2}}{t^{\frac{2-m-\beta}{\beta-1}}} w^{2\beta+2\eta +m-2}.
\end{eqnarray*}
Hence, for a positive constant still denoted $K=K(\eta,m,\gamma)$, we have 
$$
\mathcal L v\leq w^\beta\left(-\ep + A (3+3\eta ) \delta   + AK \frac{1}{t^{\frac{2-m-\beta}{\beta-1}}}+A^2K\frac 1{t^{\frac{2-m-\beta}{\beta-1}}}\right),
$$
since $\beta+\eta+m-2>0$ and $w<1$. Recall \eqref{eq:delta} and that $2-m-\beta>0$. Hence, up to enlarging  $T=T(\eta,m,\gamma,A,\ep)>0$, we have $\mathcal L v(t,x)\leq 0$ for $t> T$, $X(t)< x< z_1 t^{\frac{\beta-m} {2(\beta-1)}}$, that is after a large time and in the blow-up zone.
\end{proof}

\begin{lem}[Comparison modulo shifts]
\label{lem:initial}
Let the assumptions of Theorem \ref{th:main} hold. Let $v(t,x)$ be the subsolution to \eqref{eq} on $(T,+\infty)\times \R$, as defined in Lemma \ref{lem:sub-kpp}. Then there are a large enough time $T'>T$ and a large enough shift $X'>0$ such that
\begin{equation}
\label{initial}
v(T,x)\leq u(T',x-X'),\quad \forall x\in \R.
\end{equation}
\end{lem}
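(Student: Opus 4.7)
The plan is to verify \eqref{initial} by splitting $\R$ at the transition point $X'+c_0T'$ into a bulk region, where the invasion property \eqref{invasion} applies, and a far tail region, where the Herrero--Pierre algebraic lower bound dominates the algebraic tail of $v(T,\cdot)$.

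First, I would record what the snapshot $v(T,\cdot)$ looks like. By construction in Lemma~\ref{lem:sub-kpp}, $v(T,x)$ equals the constant
\[
M:=\frac{\eta}{1+\eta}\cdot\frac{1}{\left(A(1+\eta)\right)^{1/\eta}}<s_0<1
\]
on $(-\infty,X(T)]$ and coincides with $w(T,x)(1-Aw(T,x)^\eta)$ on $(X(T),+\infty)$, so that $v(T,\cdot)\leq M$ pointwise. Moreover, thanks to \eqref{self-infini-2}, one has the sharper tail bound $v(T,x)\leq w(T,x)\leq K_\infty T^{\frac{1}{1-m}}/x^{\frac{2}{1-m}}$ for every $x\geq z_1 T^{\frac{\beta-m}{2(\beta-1)}}$.

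Next, I would fix $\gamma\in(0,1-M)$ and use \eqref{invasion} to select $T_0>T$ such that $u(t,y)\geq 1-\gamma>M$ for all $t\geq T_0$ and $y\leq c_0 t$. The Herrero--Pierre estimate recalled after Assumption~\ref{ass:initial} then allows me to pick $T'\geq T_0$ large enough that, at the fixed time $T'$, the bound $u(T',y)\geq C(T')/y^{\frac{2}{1-m}}$ holds on some half-line $[y_1,+\infty)$, with $C(T')\geq K_\infty T^{\frac{1}{1-m}}$ and $c_0 T'\geq y_1$. Finally, I would choose a shift $X'>0$ large enough that $X'+c_0T'\geq z_1 T^{\frac{\beta-m}{2(\beta-1)}}$.

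With these choices, the verification of \eqref{initial} splits cleanly. If $x\leq X'+c_0T'$, then $x-X'\leq c_0T'$ and invasion yields $u(T',x-X')\geq 1-\gamma>M\geq v(T,x)$. If instead $x>X'+c_0T'$, then $x-X'>c_0T'\geq y_1$ and $x> z_1 T^{\frac{\beta-m}{2(\beta-1)}}$; Herrero--Pierre combined with $X'>0$ and the tail bound on $v(T,\cdot)$ then give
\[
u(T',x-X')\geq \frac{C(T')}{(x-X')^{\frac{2}{1-m}}}\geq \frac{C(T')}{x^{\frac{2}{1-m}}}\geq \frac{K_\infty T^{\frac{1}{1-m}}}{x^{\frac{2}{1-m}}}\geq v(T,x).
\]
The only mild subtlety I anticipate is keeping track of the order of quantifiers in Herrero--Pierre (the threshold $y_1$ may depend on $T'$), but since the condition $C(T')\geq K_\infty T^{\frac{1}{1-m}}$ requires only a single application of $C(T')\to+\infty$, one may first fix such a $T'$ and then enlarge $X'$ accordingly, which suffices.
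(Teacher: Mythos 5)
Your overall plan is the right one and uses the same three ingredients as the paper: the spreading property \eqref{invasion} to control $u$ on a left half-line, the Herrero--Pierre algebraic lower bound on the far tail of $u(T',\cdot)$, and the tail estimate \eqref{self-infini-2} on $w(T,\cdot)$ to bound $v(T,\cdot)$ from above. However, your two-region decomposition contains a genuine gap: in the range $x>X'+c_0T'$ you write $x-X'>c_0T'\geq y_1$ and invoke Herrero--Pierre, which forces the requirement $c_0T'\geq y_1(T')$. This is \emph{not} something the Herrero--Pierre statement gives you. That result asserts $u(T',y)\geq C(T')/y^{\frac{2}{1-m}}$ only ``for $y$ large enough,'' with an unspecified threshold $y_1(T')$ that may well exceed $c_0T'$ and may grow with $T'$; requiring both $C(T')\geq K_\infty T^{\frac{1}{1-m}}$ and $c_0T'\geq y_1(T')$ is therefore an over-constraint that the cited result does not support. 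Your concluding remark — ``first fix $T'$ and then enlarge $X'$ accordingly'' — does not close the hole either: for any $X'$ there remain points $x$ with $c_0T'<x-X'<y_1(T')$, where neither invasion nor the Herrero--Pierre bound is available for $u(T',x-X')$.

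The gap is repairable, but it needs an explicit third region, and that is where your proof diverges from the paper's. One way, close in spirit to yours, is to observe that $u(T',\cdot)>0$ and is continuous, so $\mu:=\min_{y\in[c_0T',\,y_1(T')]}u(T',y)>0$; since $v(T,x)\to 0$ as $x\to+\infty$, one can then pick $X'$ so large that $v(T,x)\leq\mu$ for $x\geq X'+c_0T'$, which covers the intermediate window, and then treat $x-X'\geq y_1(T')$ by Herrero--Pierre exactly as you did. The paper resolves the same difficulty differently: it first reduces, by comparison, to a nonincreasing $u_0$, so that $u(t,\cdot)$ is nonincreasing in $x$; the middle region $x-X'\in[X_1,X_2]$ is then bridged via $u(T',x-X')\geq u(T',X_2)\geq w(T,X_2)\geq w(T,x)\geq v(T,x)$, using the spatial monotonicity of both $u$ and $w$. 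Either fix is fine; as written, though, your argument is missing this step, and the ``enlarge $X'$'' remark does not substitute for it.
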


\begin{proof} By the comparison principle, we can assume without loss of generality  that $u_0$ is nonincreasing on $\R$, and therefore, from the comparison principle again,
\begin{equation}
\label{decroit}
\text{ for any $t>0$, $u(t,\cdot)$ is nonincreasing on $\R$.}
\end{equation}
Moreover, we have from \eqref{self-infini-3} that
\begin{equation}
\label{decroit-bis}
\text{ for any $t>0$, $w(t,\cdot)$ is nonincreasing on $(z_1t^{\frac{\beta-m}{2(\beta-1)}},+\infty)$.}
\end{equation}
Recall also that, up to enlarging $T>0$ from Lemma \ref{lem:sub-kpp}, we have 
$X(T)\leq z_1T^{\frac{\beta-m}{2(\beta-1)}}$. Now observe that, thanks to \eqref{self-infini},
$$
w(T,x)=\frac 1{T^{\frac{1}{\beta-1}}}\varphi\left(\frac{x}{T^{\frac{\beta-m}{2(\beta-1}}}\right)\leq  \frac{K_\infty T^{\frac{1}{1-m}}}{x^{\frac{2}{1-m}}}, \quad \text{ for any $x$ large enough. }
$$
Hence there is $X_1=X_1(T)>z_1T^{\frac{\beta-m}{2(\beta-1)}}\geq X(T)$ such that
\begin{equation}
\label{v(T)}
v(T,x)\leq \begin{cases} \truc \frac{\eta}{1+\eta} & \text{ for all } x\in \R \vspace{5pt}\\
w(T,x)\leq \frac{K_\infty T^{\frac{1}{1-m}}}{x^{\frac{2}{1-m}}} &\text{ for } x\geq X_1.
\end{cases}
\end{equation}
Next, from the invasion result \eqref{invasion}, there is $T_0>T$ such that 
\begin{equation}
\label{next}
u(t,x)\geq \truc\frac \eta{1+\eta}, \quad \forall t\geq T_0, \forall x\leq X_1.
\end{equation}

Last, by comparison with the fast diffusion equation ---namely $\partial _t u\geq \partial _{xx}(u^m)$--- and thanks to  \cite[Theorem 2.4]{Her-Pie-85}, we know that for any $T'>0$, there is $C(T')>0$ such that $u(T',x)\geq \frac{C(T')}{x^{\frac{2}{1-m}}}$ for $x$ large enough, and moreover $C(T')\to +\infty$ as $T'\to +\infty$. Hence, we can now fix $T'>T_0$ so that, for some $X_2=X_2(T')>X_1$,
\begin{equation}
\label{last}
u(T',x)\geq \frac{K_\infty  T^{\frac{1}{1-m}}}{x^{\frac{2}{1-m}}}, \quad \forall  x\geq X_2.
\end{equation}

Now we define $X':=X_2-X_1$ and prove \eqref{initial}, by dividing into three regions. When $x\leq X_2$, so that $x-X' \leq X_1$, this follows from \eqref{next} and the upper line in \eqref{v(T)}. When $X_2\leq x\leq 2X_2-X_1$, so that $X_1\leq x-X'\leq X_2$, we successively use \eqref{decroit}, \eqref{last}, the second line in \eqref{v(T)} and \eqref{decroit-bis} to get
$$
u(T',x-X')\geq u(T',X_2)\geq w(T,X_2)\geq w(T,x)\geq v(T,x).
$$
When $x\geq 2X_2-X_1$, so that $x-X'\geq X_2$, we successively use \eqref{last} and the second line in \eqref{v(T)} to obtain
$$
u(T',x-X')\geq \frac{K_\infty T^{\frac{1}{1-m}}}{(x-X')^{\frac{2}{1-m}}}\geq \frac{K_\infty T^{\frac{1}{1-m}}}{x^{\frac{2}{1-m}}}\geq w(T,x)\geq v(T,x).
$$
This completes the proof of \eqref{initial}.
\end{proof}

From Lemma \ref{lem:sub-kpp} and Lemma \ref{lem:initial}, we deduce that
$$
v(t+T-T',x+X')\leq u(t,x), \quad \forall (t,x)\in(T',+\infty)\times \R.
$$
Now, the proof is the same as that in \cite{Alf-tails} or \cite[subsections 5.1 and 6.2]{Alf-Gil-17}. Roughly speaking, the  subsolution \lq\lq lifts'' the solution $u(t,x)$ on intervals that enlarge with the correct acceleration, which provides the lower bound in \eqref{x-m-p} on the level sets $E_\lambda(t)$ when $\lambda$ is small. Next, the estimate for larger $\lambda$ is obtained thanks to the fact that invasion occurs for front-like initial data, see \eqref{invasion}. We omit the details and conclude that the lower bound in \eqref{x-m-p} is proved. \qed

\subsection{Upper  bound on the level sets  in  \eqref{levelset-FDE}}\label{ss:upper-kpp}

Let $\lambda \in(0,1)$ and $\ep >0$ small be given. The expression of $x^+(t)$ in \eqref{x-m-p} was already proved in \cite{Alf-Gil-17}. We thus assume  $\alpha\geq \frac{2}{1-m}$ and look after the improvement \eqref{improved}. Again the self similar solution of Section \ref{s:self} provides a more accurate supersolution.

 In view of  $\alpha\geq \frac{2}{1-m}$, the upper bound in \eqref{algebraic-acc-FDE},  and the comparison principle, it is enough (to prove the upper estimate on the level sets) to consider the case where
\begin{equation}
\label{algebraic-encore}
u_0(x)=\frac{\overline C}{x^{\frac{2}{1-m}}}, \quad \forall x\geq x_0>1.
\end{equation}

We define $w(t,x)$ as the self-similar solution \eqref{def:self} of Section \ref{s:self}, solving \eqref{eq-lin1} with $r\leftarrow \overline r$ (so that $z_0\leftarrow \overline{z_0}$, $C_\infty\leftarrow \overline{C_\infty}$ etc.).
From \eqref{nonlinearity-accbis-FDE} it is immediate that
$$
\psi(t,x):=\min\left(1,w(t,x)\right)
$$
is a supersolution for equation \eqref{eq}. Recalling convention \eqref{convention}, notice that $\psi (t,x)=1$ in the domain $t>0$, $x\leq \overline{z_0} t^{\frac{\beta-m}{2(\beta-1)}}$.

Now selecting $T>0$ large enough so that
\begin{equation}
\label{T}
k_\infty T^{\frac{1}{1-m}}\geq \overline C,\quad  x_0\leq \overline{z_0} T^{\frac{\beta-m}{2(\beta-1)}},\quad \frac{1}{T^{\frac{1}{\beta-1}}}\min_{\overline{z_0}<z\leq \overline{z_1}}\varphi(z)\geq \frac{\overline C}{\overline{z_0}^{\frac{2}{1-m}}T^{\frac{\beta-m}{(1-m)(\beta-1)}}},
\end{equation}
we claim that $\psi(T,\cdot)\geq u_0$. Indeed in the range $x\leq \overline{z_0} T^{\frac{\beta-m}{2(\beta-1)}}$ this is clear; in the range $\overline{z_0} T^{\frac{\beta-m}{2(\beta-1)}}<x<\overline{z_1}T^{\frac{\beta-m}{2(\beta-1)}}$, which enforces $x>x_0$ in view of the second inequality in \eqref{T}, this is a consequence of \eqref{algebraic-encore} and the third inequality in \eqref{T}; last, in the range $x\geq \overline{z_1}T^{\frac{\beta-m}{2(\beta-1)}}$ this follows from \eqref{self-infini-2}, the first inequality in \eqref{T}, and \eqref{algebraic-encore}.

Hence,  it follows from the comparison principle that
\begin{equation*}\label{comparisonbis}
u(t,x)\leq\psi(t+T,x)\leq w(t+T,x),\quad \forall (t,x)\in [0,+\infty)\times  \mathbb{R}.
\end{equation*}
For $t\geq t_\lambda$ and $x\in E_\lambda (t)$, it follows that
$w(t+T,x)\geq \lambda$ which, using the expression for $w$ transfers into
$$
\lambda (t+T)^{\frac{1}{\beta-1}}\leq \varphi\left(\frac{x}{(t+T)^{\frac{\beta-m}{2(\beta-1)}}}\right).
$$
From the properties of $\varphi$ we infer that $\frac{x}{(t+T)^{\frac{\beta-m}{2(\beta-1)}}}\to \overline{z_0}$ as $t\to+\infty$ and thus, from \eqref{self-z0},
$$
\lambda (t+T)^{\frac{1}{\beta-1}}\leq \frac{K_0}{\left(\frac{x}{(t+T)^{\frac{\beta-m}{2(\beta-1)}}}-\overline{z_0}\right)^{\frac{1}{\beta-1}}},
$$
for $t$ large enough. Hence
$$
x\leq (t+T)^{\frac{\beta-m}{2(\beta-1)}}\left(\overline{z_0}+ \left(\frac{K_0}{\lambda}\right)^{\beta-1}\frac{1}{t+T}\right)
<(1+\ep)\overline{z_0}t^{\frac{\beta-m}{2(\beta-1)}}=:x^{+}(t),
$$
 for $t\geq  T_{\lambda,\ep}$ chosen sufficiently large. This concludes the proof of  the upper bound in \eqref{improved}. \qed
\begin{rem}\label{rem:up2}
We point out that \eqref{def:self-2} provides a supersolution whatever the choice of a self-similar solution $\varphi$. In particular, all the asymptotics which we established in Section~\ref{s:self} are necessary only for the construction of the subsolution, which as usual is the more intricate part of the proof.

Therefore in the regime $m+\frac 2\alpha \leq \beta <2-m$, $\frac{1}{1-m}<\alpha<\frac{2}{1-m}$, replacing $\varphi$ by $\widetilde{\varphi}$ a solution of~\eqref{eq-self-similar} blowing up at some point $\widetilde{z}_0$ and decaying at infinity with asymptotics
$$
o(1) = \widetilde \varphi (z)\geq \frac{\overline{C}}{z^\alpha} \quad \mbox{ as } z \to +\infty,  
$$
one can repeat a similar argument to the above to find an upper estimate on the position of any level set in the form of \eqref{improved}; this was announced in Remark~\ref{rem:up1}. We also refer to Remark~\ref{rem:up3} below for the existence of such a function~$\widetilde{\varphi}$. 
\end{rem}

\section{Actual construction of the self-similar solution}\label{s:construction}

As far as the construction of self-similar solutions is concerned, let us mention the strategy of \cite{Guo-95} --- see also   \cite{Har-Wei-82}, \cite{Guo-Guo-01} for related results--- which mainly consists in using an integral formulation of the problem. Because of a non integrable singularity in the problem under investigation, this seems quite impracticable and we therefore adopt a different approach through sub and supersolutions.

Another difficulty is that we are looking for both limiting behaviours to be, in some sense, critical, and therefore we have to \lq\lq shoot" simultaneously in both directions. To do so, we start by looking for a solution that has the appropriate blow-up profile, with the blow-up point $z_0$ being any positive number. We will then show that if $z_0$ is too large, this blow-up solution decays \lq\lq slowly'' at infinity, while if $z_0$ is small it decays \lq\lq quickly''. This will lead us to find a particular $z_0$ where the solution has both wanted asymptotics at the blow-up point and at infinity.

\subsection{Comparison principles}

As mentioned above, we will use sub and supersolutions to construct the self-similar solution of Section \ref{s:self}. We state here some properties that we will use extensively. First, we say that $\psi$ is a subsolution of \eqref{eq-self-similar} if it satisfies the differential inequality
$$
-\frac{1}{\beta-1}\left(\psi(z)+\frac{\beta-m}{2}z\psi'(z)\right)\leq (\psi^m)''(z)+r\psi^\beta(z).
$$
Similarly, $\psi$ is a supersolution if the opposite inequality holds. Due to the singularity of the equation as $\psi \to 0$, we will only consider positive (sub and super) solutions.

Let us already point out that any \lq \lq shift to the left" of a decreasing subsolution remains a subsolution, whereas any \lq \lq shift to the right" of a decreasing supersolution remains a supersolution. This follows from a straightforward computation. For later use, we state this in the following proposition.

\begin{prop}[Shifting sub and supersolutions]\label{prop:subsuper_shift}
Let $\psi$ be a decreasing function. If $\psi$ is a subsolution (resp. a supersolution) of~\eqref{eq-self-similar}, then for any $a>0$, the  shifted function $\psi (\cdot +a)$ (resp. $\psi (\cdot -a)$) is also a subsolution (resp. a supersolution) of~\eqref{eq-self-similar}.
\end{prop}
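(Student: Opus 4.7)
The plan is a direct computation, exploiting the fact that the only term in \eqref{eq-self-similar} which does not commute with a shift is the transport-like contribution $\frac{\beta-m}{2}z\varphi'(z)$, in which $z$ appears explicitly as a coefficient. Everything else, namely $\varphi$, $(\varphi^m)''$ and $\varphi^\beta$, depends on $\varphi$ and its derivatives only, so it transforms identically under translation.

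Concretely, for the subsolution case I would set $\widetilde\psi(z):=\psi(z+a)$ with $a>0$ and observe $\widetilde\psi'(z)=\psi'(z+a)$, $(\widetilde\psi^m)''(z)=(\psi^m)''(z+a)$. Writing the would-be subsolution inequality for $\widetilde\psi$ at $z$ and subtracting the known subsolution inequality for $\psi$ evaluated at $z+a$, every term cancels except the discrepancy coming from the explicit $z$-factor, which equals
$$\frac{a(\beta-m)}{2(\beta-1)}\,\psi'(z+a).$$
Under the standing parameters $m\in(0,1)$ and $\beta>1$, the prefactor $\frac{a(\beta-m)}{2(\beta-1)}$ is strictly positive, while $\psi'(z+a)\leq 0$ by the decreasing assumption on $\psi$. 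Thus the discrepancy is nonpositive, the subsolution inequality is preserved, and $\psi(\cdot+a)$ is again a subsolution.

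The supersolution case is symmetric: setting $\widetilde\psi(z):=\psi(z-a)$ with $a>0$, the analogous subtraction produces the discrepancy
$$-\frac{a(\beta-m)}{2(\beta-1)}\,\psi'(z-a),$$
which is now nonnegative for the same monotonicity reason. Adding a nonnegative quantity to a left-hand side that already dominates the right-hand side preserves the $\geq$ inequality, so $\psi(\cdot-a)$ is a supersolution. There is no serious obstacle: the whole content of the proposition is the sign of $\psi'$ combined with the sign of $\beta-m$, and the only mild care needed is to write the shifted inequality \emph{at the same base point} $z$ before comparing it with the unshifted inequality at $z\pm a$, so that the transport term produces a clean $\pm a\,\psi'$ remainder.
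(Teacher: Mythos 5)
Correct, and this is precisely the ``straightforward computation'' the paper alludes to without writing out: the only $z$-dependent coefficient in \eqref{eq-self-similar} sits in the transport term $\frac{\beta-m}{2}z\varphi'(z)$, and comparing the shifted inequality at $z$ with the original one at $z\pm a$ isolates the remainder $\pm\frac{a(\beta-m)}{2(\beta-1)}\psi'(z\pm a)$, whose sign is controlled by monotonicity of $\psi$ together with $\beta>1>m$. Your sign bookkeeping is right in both the sub- and supersolution cases.
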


Our main tool will be a comparison principle, which we establish through a sliding argument and thanks to the previous proposition.

\begin{prop}[A comparison principle]\label{prop:ode_comp} Let $\psi_1$ and $\psi_2$ be respectively a sub and a supersolution of \eqref{eq-self-similar} on an interval~$I$. Furthermore, we assume that both functions are decreasing.
\begin{enumerate}[$(i)$]
\item If $\psi_1 (z_1) = \psi_2 (z_1)$ and $\psi_1 ' (z_1) > \psi_2 ' (z_1)$ for some $z_1 \in I$, then $\psi_1 > \psi_2$ in $(z_1,+\infty) \cap I$.
\item If $\psi_1 (z_1) = \psi_2 (z_1)$ and $\psi_1 ' (z_1) < \psi_2 ' (z_1)$ for some $z_1 \in I$, then $\psi_1 > \psi_2$ in $(-\infty,z_1) \cap I$.
\end{enumerate}
\end{prop}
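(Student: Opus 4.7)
The strategy is a sliding argument whose cornerstone is that \eqref{eq-self-similar} is \emph{not} translation invariant: shifting a decreasing supersolution strictly to the right upgrades it to a \emph{strict} supersolution. I will detail $(i)$; $(ii)$ follows symmetrically by sliding the subsolution.

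Write $L\psi := (\psi^m)'' + r\psi^\beta + \frac{1}{\beta-1}\bigl(\psi + \tfrac{\beta-m}{2}\,z\psi'\bigr)$, so that sub- and supersolutions satisfy $L\psi\geq 0$ and $L\psi\leq 0$ respectively. A direct change of variables gives, for the right-shifted function $\tilde\psi_2^a(z):=\psi_2(z-a)$,
\begin{equation*}
L\tilde\psi_2^a(z)\;=\;L\psi_2(z-a)\;+\;\frac{\beta-m}{2(\beta-1)}\,a\,\psi_2'(z-a).
\end{equation*}
Because $\beta>1$, $\beta>m$ (by \eqref{alpha-beta-acc-FDE}) and $\psi_2'<0$, the extra term is strictly negative whenever $a>0$, so $\tilde\psi_2^a$ is a \emph{strict} supersolution. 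The analogous computation for $\tilde\psi_1^a(z):=\psi_1(z+a)$ produces, with opposite sign, a \emph{strict} subsolution for $a>0$.

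Suppose now that $(i)$ fails. Taylor expansion at $z_1$ and the strict inequality on the derivatives give $\psi_1>\psi_2$ on a right neighborhood of $z_1$, so there is a smallest $z^*>z_1$ in $I$ with $\psi_1(z^*)=\psi_2(z^*)$, and $\psi_1>\psi_2$ on $(z_1,z^*)$. Set
\begin{equation*}
a^* \;:=\; \inf\bigl\{a\geq 0 : \tilde\psi_2^a\geq\psi_1 \text{ on } [z_1,z^*]\bigr\},
\end{equation*}
which is meaningful in our applications, where $\psi_2$ is defined on a sufficiently large left extension of $I$ and blows up at its left endpoint. The strict inequality $\psi_1>\psi_2$ on $(z_1,z^*)$ gives $a^*>0$, and the blow-up of $\psi_2$ ensures $a^*<+\infty$. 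By continuity, $\tilde\psi_2^{a^*}\geq\psi_1$ on $[z_1,z^*]$ with contact at some $z^\sharp$; since $a^*>0$ and $\psi_2$ is strictly decreasing, $\tilde\psi_2^{a^*}(z_1)>\psi_1(z_1)$ and $\tilde\psi_2^{a^*}(z^*)>\psi_1(z^*)$, so the contact occurs at an interior point $z^\sharp\in(z_1,z^*)$.

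At $z^\sharp$ the values and first derivatives coincide, and $(\tilde\psi_2^{a^*})''(z^\sharp)\geq\psi_1''(z^\sharp)$ (super touches sub from above). The zeroth- and first-order terms in $L$ therefore cancel, so
\begin{equation*}
0 \;<\; L\psi_1(z^\sharp) - L\tilde\psi_2^{a^*}(z^\sharp) \;=\; m\,\psi_1(z^\sharp)^{m-1}\bigl[\psi_1''(z^\sharp) - (\tilde\psi_2^{a^*})''(z^\sharp)\bigr]
\end{equation*}
forces $\psi_1''(z^\sharp) > (\tilde\psi_2^{a^*})''(z^\sharp)$, a contradiction. Part $(ii)$ is handled identically using the left-shifted strict subsolution $\tilde\psi_1^a$ on $[z^*,z_1]$. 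The crux of the argument, and the only non-routine point, is the upgrade to the \emph{strict} inequality $L\tilde\psi_2^{a^*}<0$: without it, the sub/super inequalities yield only $\psi_1''(z^\sharp)\geq(\tilde\psi_2^{a^*})''(z^\sharp)$, which is compatible with contact. This strictness is supplied by the drift term $\frac{\beta-m}{2(\beta-1)}z\psi'$, the sole source of non-translation-invariance of \eqref{eq-self-similar}.
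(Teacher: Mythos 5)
Your proof follows the same sliding strategy as the paper: find a first crossing $z_2 > z_1$, translate the decreasing supersolution until it touches the subsolution at an interior point, and test the differential inequalities there. The genuine added value in your write-up is the explicit computation
$L\tilde\psi_2^a(z) = L\psi_2(z-a) + \tfrac{\beta-m}{2(\beta-1)}\,a\,\psi_2'(z-a)$,
which shows that a right-shift of a decreasing supersolution is a \emph{strict} supersolution. This is precisely the ingredient that turns the contact-point analysis into an honest contradiction (without it, since $L\psi_1 - L\tilde\psi_2^{a^*} = m\psi^{m-1}[\psi_1'' - (\tilde\psi_2^{a^*})'']$ at the contact, one only gets equalities, not an impossibility); the paper leaves this step implicit in the phrase ``one reaches a contradiction.'' On the other hand, your choice of sliding set, $\inf\{a\ge 0 : \tilde\psi_2^a\ge\psi_1$ on $[z_1,z^*]\}$, keeps the test interval fixed and therefore needs $\psi_2$ to be defined and large on a left extension of $I$ — an assumption you acknowledge but that is not part of the statement. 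The paper avoids this entirely by shrinking the test interval with the shift, taking $a_0 := \inf\{a>0 : \psi_2(\cdot - a) > \psi_1$ in $(z_1 + a, z_2)\}$, which only ever evaluates $\psi_1,\psi_2$ inside $(z_1,z_2)\subset I$; the interior contact point then follows from the strictly positive gap at both moving endpoints (both consequences of the monotonicity). Adopting that variant makes your argument match the proposition's generality while preserving the clarifying strictness computation.
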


\begin{proof}
Let us prove the first statement. We proceed by contradiction and assume there is some $z > z_1$ where $\psi_1 (z) \leq \psi_2 (z)$. Due to the inequality $\psi_1 ' (z_1) > \psi_2 ' (z_1)$, clearly there exists $z_2 >z_1$ such that $\psi_1 (z_2) = \psi_2 (z_2)$ and $\psi_1 > \psi_2$ in $(z_1, z_2)$. Since both functions are decreasing, we can define
$$
a_0:=\inf \{a>0:  \psi_2 (\cdot -a) > \psi_1 \mbox{ in } (z_1+ a,z_2) \}>0.
$$
Hence, $\psi_2 (\cdot -a_0) - \psi_1$ reaches a zero minimum value at some point in $(z_1 + a_0, z_2)$. Now according to Proposition~\ref{prop:subsuper_shift}, for any $a>0$, the function $\psi_2 (\cdot -a)$ is also a supersolution of \eqref{eq-self-similar}. Thus, evaluating the differential inequalities satisfied by $\psi_2 (\cdot - a_0)$ and $\psi_1$ at this zero minimum value, one reaches a contradiction. We conclude that $\psi_1 > \psi_2$ in $(z_1, +\infty) \cap I$.

The second statement can be proved similarly and we omit the details.
\end{proof}

\begin{rem}\label{rem:comp1}
The monotonicity assumption on the functions $\psi_1$ and $\psi_2$ in Proposition~\ref{prop:ode_comp} can be weakened. As a matter of fact, in the sequel we will occasionally use slightly different sliding arguments to reach a similar comparison property.

Let us also notice that, by continuity of solutions of the ODE \eqref{eq-self-similar} with respect to boundary conditions, if in statement $(i)$ one only assumes that $\psi_1 ' (z_1) \geq \psi_2 ' (z_1)$, then one can still deduce that $\psi_1 \geq  \psi_2$ in $(z_1,+\infty) \cap I$. The same remark holds for statement~$(ii)$.
\end{rem}

This comparison principle can also be extended to the strict ordering of blow-up points. 

\begin{prop}[Ordering blow-up points]\label{prop:ode_comp_blowup}
Let $\psi_1$ and $\psi_2$ be respectively a sub and a supersolution of \eqref{eq-self-similar}. Furthermore, we assume that both functions are decreasing, and that $\psi_2$ blows up at some point $Z_2$.

If $\psi_1 (z_1) = \psi_2 (z_1)$ and $\psi_1 ' (z_1) \leq \psi_2 ' (z_1)$ for some $z_1$ in both their intervals of definition, then either $\psi_1 \equiv \psi_2$ on $(Z_2,z_1)$ or $\psi_1$ blows up at some point $Z_1 > Z_2$.
\end{prop}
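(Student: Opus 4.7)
The plan is to combine the comparison principle (Proposition~\ref{prop:ode_comp} and Remark~\ref{rem:comp1}) with a sliding argument based on the shift invariance of Proposition~\ref{prop:subsuper_shift}.

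First I would apply Proposition~\ref{prop:ode_comp}(ii), invoking Remark~\ref{rem:comp1} to accommodate the non-strict hypothesis $\psi_1'(z_1) \leq \psi_2'(z_1)$, to obtain
$$
\psi_1 \geq \psi_2 \quad \text{on } (-\infty, z_1] \cap I_1 \cap I_2,
$$
where $I_i$ denotes the maximal interval of definition of $\psi_i$. I would then immediately exclude the case $Z_1 < Z_2$: there, $\psi_1$ would be finite and continuous in a neighbourhood of $Z_2$ while $\psi_2(z) \to +\infty$ as $z \to Z_2^+$, in direct contradiction with $\psi_1 \geq \psi_2$ just above. This forces $Z_1 \geq Z_2$; if the inequality is strict, we are already in the second alternative of the conclusion.

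The remaining case $Z_1 = Z_2 =: Z$ is where I expect the main obstacle to lie, and a sliding argument is needed to establish $\psi_1 \equiv \psi_2$ on $(Z, z_1)$. Arguing by contradiction, suppose there is some $z^\ast \in (Z, z_1)$ with $\psi_1(z^\ast) > \psi_2(z^\ast)$, and for small $a>0$ define the shifted function $\psi_2^a(z) := \psi_2(z - a)$. By Proposition~\ref{prop:subsuper_shift}, $\psi_2^a$ is still a decreasing supersolution, now blowing up at $Z + a > Z_1$. The strict monotonicity and continuity of $\psi_2$ give on one hand $\psi_2^a(z_1) > \psi_2(z_1) = \psi_1(z_1)$, and on the other hand $\psi_2^a(z^\ast) < \psi_1(z^\ast)$ for $a$ small enough. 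The intermediate value theorem then produces a largest zero $\bar z \in (z^\ast, z_1)$ of $\psi_2^a - \psi_1$, at which $\psi_2^a - \psi_1 \geq 0$ just to the right, hence $\psi_1'(\bar z) \leq (\psi_2^a)'(\bar z)$. A second application of Proposition~\ref{prop:ode_comp}(ii) and Remark~\ref{rem:comp1} to the pair $(\psi_1, \psi_2^a)$ at $\bar z$ then yields $\psi_1 \geq \psi_2^a$ on $(Z + a, \bar z)$, contradicting the fact that $\psi_2^a(z) \to +\infty$ as $z \to (Z + a)^+$ while $\psi_1$ remains bounded at $Z + a > Z = Z_1$.

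The conceptual difficulty of this last step is that one cannot apply the comparison principle directly to the pair $(\psi_1, \psi_2)$ near $Z$: the two functions touch at $z_1$ possibly with equal derivatives and they diverge simultaneously at $Z$, so no interior contact is available. The infinitesimal rightward shift of $\psi_2$, which preserves both the supersolution property and the monotonicity by Proposition~\ref{prop:subsuper_shift}, is precisely what manufactures a bona fide interior contact point $\bar z$ where the one-sided comparison principle bites and the blow-up mismatch at $Z + a$ delivers the contradiction. Beyond the careful bookkeeping of monotonicity directions and the side of the touching point under consideration, no new analytic ingredient is required.
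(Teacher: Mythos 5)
Your proof is correct and follows essentially the same sliding argument as the paper: you first apply Proposition~\ref{prop:ode_comp}(ii) together with Remark~\ref{rem:comp1} to get $\psi_1 \geq \psi_2$ on the left of $z_1$ and $Z_1 \geq Z_2$, then rule out $Z_1 = Z_2$ with a small shift that separates the two blow-up points and manufactures an interior contact where the comparison principle yields a contradiction. The only (cosmetic) difference is that you shift the supersolution $\psi_2$ to the right, whereas the paper shifts the subsolution $\psi_1$ to the left; the two are mirror images of each other.
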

\begin{proof}
By the previous proposition and the subsequent remark, we already know that $\psi_1 \geq \psi_2$ on the left of $z_1$ (and in the intersection of both their intervals of definition). In particular $\psi_1$ blows up at some $Z_1 \geq Z_2$ and it only remains to show that $\psi_1 \not \equiv \psi_2$ implies that $Z_1 > Z_2$. We proceed by contradiction and assume that there exists some point $\tilde{z}$ on the left of $z_1$ where $\psi_1 (\tilde{z}) >  \psi_2 (\tilde{z})$, yet $Z_1 = Z_2$. For any $a>0$, the shifted to the left $\psi _{1}(\cdot +a)$ subsolution is (strictly) smaller than $\psi_2$ in neighborhoods of $Z_2$ (thanks to $Z_1=Z_2$) and of $z_1-a$ (thanks to the monotonicity assumption). Moreover, if $a$ is small enough, then $\psi_1 (\tilde{z} + a) - \psi_2 (\tilde{z})$ has to be positive, so that both functions intersect at least twice. Using again Proposition~\ref{prop:ode_comp} and continuity of solutions of the ODE w.r.t. boundary conditions, as combined in Remark \ref{rem:comp1}, we see that $\psi_1 (\cdot +a) \geq \psi_2$ on a neighborhood of $Z_2$, contradicting the fact that it blows up at $Z_2 -a$.
\end{proof}
We point out that an immediate corollary of Proposition~\ref{prop:ode_comp_blowup} is that two different solutions blowing up at the same point cannot intersect.

\subsection{Existence of a solution with the appropriate blow-up profile}\label{ss:bup}

In this subsection, we fix $z_0$ any positive real number. Our construction relies on several sub and supersolutions which we detail below. For $C>0$, we let 
$$
\psi_{0,C}(z): = \frac{C}{(z-z_0)^{\frac{1}{\beta - 1}  }},
$$
which we aim at plugging in equation \eqref{eq-self-similar}. We compute
$$-\frac{1}{\beta-1}\left(\psi_{0,C} (z)+\frac{\beta-m}{2}z\psi_{0,C} '(z)\right)= -\frac{C}{\beta-1}\left(\frac{1}{(z-z_0)^{\frac{1}{\beta -1}}} - \frac{\beta-m}{2 (\beta -1)} \frac{z}{z-z_0} \frac{1}{(z-z_0)^{\frac{1}{\beta -1}}} \right)$$
while 
$$(\psi_{0,C}^m)''(z)+r\psi_{0,C}^\beta(z) =  \frac{m}{\beta-1} \frac{m+\beta-1}{\beta-1} \frac{C^m}{(z-z_0)^{\frac{m +2 \beta -2}{\beta -1}}} + r \frac{C^\beta}{(z-z_0)^{\frac{\beta}{\beta-1}}}.$$
Thanks to the fact that $m + 2 \beta - 2 <\beta$, we find that there is some 
\begin{equation}\label{def:C0}
C_0 := \left(\frac{z_0 (\beta-m)}{2 r (\beta -1)^2}\right)^{\frac{1}{\beta-1}}>0
\end{equation}
such that, for any $C > C_0$, $\psi_{0,C}$ satisfies
$$-\frac{1}{\beta-1}\left(\psi_{0,C} (z)+\frac{\beta-m}{2}z\psi_{0,C} '(z)\right) < (\psi_{0,C}^m)''(z)+r\psi_{0,C}^\beta(z) ,$$
in a neighborhood of $z_0$ (strict subsolution) while, for any $C< C_0$, $\psi_{0,C}$ satisfies the opposite inequality (strict supersolution).

Let us be more specific concerning the neighborhoods of $z_0$. For any $\gamma>1$, let $k_0:=\frac{C_0}{\gamma}$ and $K_0:= \gamma C_0$. Then $\psi_{0,K_0}$ is a subsolution of the ODE \eqref{eq-self-similar} if
$$ \frac{(\gamma^\beta - \gamma) r C_0^\beta}{(z-z_0)^\frac{\beta}{\beta-1}} +\frac{m}{\beta-1} \frac{m+\beta-1}{\beta-1} \frac{(\gamma C_0)^m}{(z-z_0)^{\frac{m +2 \beta -2}{\beta -1}}}> - \frac{\gamma C_0}{\beta-1} \frac{1}{(z-z_0)^{\frac{1}{\beta -1}}} \left( 1 - \frac{\beta-m}{2 (\beta -1)}  \right) .$$
Since the second term in the left hand side (diffusion term) and the right hand side are positive, a sufficient condition is given by
$$
(\psi_{0,K_0} (z))^{\beta-1}>  \frac{\gamma^\beta }{\gamma^\beta - \gamma} C_1 (r,\beta,m),
$$
where $C_1$ is positive. Notice that the right hand side does not depend on $z_0$. Notice also that, recalling the expression of $C_0$ above, we can rewrite this as
\begin{equation}
\label{kappa}
0 < z - z_0  \leq   \frac{\gamma^\beta - \gamma}{\gamma C_2 (r ,\beta,m)} z_0=:\kappa z_0.
\end{equation}
One can also make more precise the neighborhood where $\psi_{0,k_0}$ is a supersolution. Here the diffusion term makes things a bit more difficult. However, we can still find some $\delta (z_0) \in (0,\kappa z_0)$, depending continuously on $z_0$, such that $\psi_{0,k_0}$ is a supersolution on $(z_0,z_0 + \delta (z_0)]$.

We will need a third function, which we define as
$$\tilde{\psi}(z) := \frac{1}{(z-z_0)^{\gamma_1}}, \qquad 0 < \gamma_1 < \frac{1}{1-m}.$$
Up to decreasing $\delta (z_0)$ and without loss of generality, the function $\tilde{\psi}$ satisfies
$$
-\frac{1}{\beta-1}\left(\tilde{\psi} (z)+\frac{\beta-m}{2}z \tilde{\psi} '(z)\right) < (\tilde{\psi}^m)''(z)+r\tilde{\psi}^\beta(z),
$$
in the interval $(z_0, z_0 +\delta(z_0)]$ (strict subsolution), as well as 
$$
\tilde{\psi} < \psi_{0,k_0} < \psi_{0,K_0}.
$$
We recall that, according to Proposition~\ref{prop:subsuper_shift}, any shift of $\tilde{\psi}$ or $\psi_{0,K_0}$ to the left remains a subsolution, while any shift to the right of $\psi_{0,k_0}$ remains a supersolution.\medskip

Equipped with these sub and supersolutions, we can now proceed. We first construct a solution which blows up but not necessarily with the appropriate asymptotics: choose any
$$\theta \in \left( \tilde{\psi} (z_0 + \delta (z_0)), \psi_{0,k_0} (z_0 + \delta (z_0)) \right),$$
and for any $\xi \in \mathbb{R}$, define $\varphi_{\theta,\xi}$ as the solution of the ODE \eqref{eq-self-similar} with initial conditions
$$
\varphi (z_0 + \delta (z_0)) = \theta, \qquad \varphi ' (z_0 + \delta (z_0)) = - \xi.
$$

\begin{cla}\label{claim1}
For any $\theta \in \left( \tilde{\psi} (z_0 + \delta (z_0)), \psi_{0,k_0} (z_0 + \delta (z_0)) \right)$, there exists $\xi \in \mathbb{R}$ such that
$$\tilde{\psi} < \varphi_{\theta,\xi} < \psi_{0,k_0}\; \text{ in } (z_0,z_0 + \delta (z_0)].
$$
In particular, $\varphi_{\theta,\xi}$ blows up at $z_0$.
\end{cla}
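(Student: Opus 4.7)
My plan is to prove Claim~\ref{claim1} by a shooting argument in the initial slope parameter $\xi$. The admissible slopes will be obtained by removing two open subsets of $\mathbb R$: those for which $\varphi_{\theta,\xi}$ escapes above the supersolution $\psi_{0,k_0}$, and those for which it escapes below the subsolution $\tilde\psi$. Concretely I would define
\[
A := \{\xi \in \mathbb R : \varphi_{\theta,\xi}(z) \geq \psi_{0,k_0}(z) \text{ for some } z \in (z_0,z_0+\delta(z_0))\},
\]
\[
B := \{\xi \in \mathbb R : \varphi_{\theta,\xi}(z) \leq \tilde\psi(z) \text{ for some } z \in (z_0,z_0+\delta(z_0))\},
\]
and aim to show that both $A$ and $B$ are open, nonempty and disjoint. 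By connectedness of $\mathbb R$ there is then $\xi^\star \in \mathbb R\setminus(A\cup B)$, and the corresponding $\varphi_{\theta,\xi^\star}$ satisfies the claim.

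For disjointness, if $\xi \in A$ and $z^\star$ is the largest point of $(z_0,z_0+\delta(z_0))$ where $\varphi_{\theta,\xi}$ meets $\psi_{0,k_0}$, then the strict supersolution property of $\psi_{0,k_0}$ together with Proposition~\ref{prop:ode_comp}(ii) and Remark~\ref{rem:comp1} yields $\varphi_{\theta,\xi}\geq \psi_{0,k_0}$ on $(z_0,z^\star]$; since $\psi_{0,k_0} > \tilde\psi$ throughout the interval, $\xi \notin B$. The symmetric argument handles the converse inclusion. Openness is a consequence of continuous dependence of the ODE~\eqref{eq-self-similar} on the initial slope $-\xi$: at a witness point $z^\star$, the comparison arguments produce a strict inequality in a left neighborhood of $z^\star$, and this inequality is stable under small perturbations of $\xi$, since in that neighborhood the orbit stays sandwiched between the two barriers and hence in a compact subset of $(0,+\infty)$.

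The main work lies in the nonemptiness of $A$ and $B$. For $\xi$ very negative, $\varphi_{\theta,\xi}'(z_0+\delta(z_0)) = -\xi$ is very large positive, and local control of the ODE (valid while $\varphi$ remains in a compact subset of $(0,+\infty)$) forces $\varphi_{\theta,\xi}$ to decrease roughly linearly from $\theta$, dropping below the positive function $\tilde\psi$ within a short left neighborhood of $z_0+\delta(z_0)$, so such $\xi$ lies in $B$. For $\xi$ very positive, a first-order expansion
\[
\psi_{0,k_0}(z_0+\delta(z_0)-h) - \varphi_{\theta,\xi}(z_0+\delta(z_0)-h) = \bigl[\psi_{0,k_0}(z_0+\delta(z_0))-\theta\bigr] - \bigl(\psi_{0,k_0}'(z_0+\delta(z_0))+\xi\bigr)h + o(h)
\]
makes the positive gap at $h=0$ vanish on a window of size $h\sim 1/\xi$, placing $\xi$ in $A$. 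Finally, any $\xi^\star \notin A\cup B$ produces a solution trapped strictly between $\tilde\psi$ and $\psi_{0,k_0}$ on $(z_0,z_0+\delta(z_0)]$, hence globally defined there, and the blow-up of $\tilde\psi$ at $z_0$ forces the same for $\varphi_{\theta,\xi^\star}$. The main obstacle is the uniform control of the $o(h)$ remainder in the expansion above as $\xi \to +\infty$, since the nonlinear terms of~\eqref{eq-self-similar} involve $\varphi$ and $\varphi'$ which both vary on the shrinking window of size $O(1/\xi)$; this has to be tracked carefully, but it remains an elementary continuity argument on a short interval where $\varphi$ only needs to grow from $\theta$ to the value of $\psi_{0,k_0}$.
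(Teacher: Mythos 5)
Your shooting-in-$\xi$ scheme is structurally the same as the paper's: your set $A$ plays the role of the paper's $\Xi_2$ (slopes for which the solution escapes above $\psi_{0,k_0}$), $B$ plays the role of $\Xi_1$ (escapes below $\tilde\psi$), and both proofs conclude by connectedness once these sets are shown to be open, nonempty and disjoint. However, there are two genuine gaps.

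First, your nonemptiness argument for $A$ is incomplete, as you yourself flag. The Taylor expansion $\psi_{0,k_0}(z_0+\delta(z_0)-h)-\varphi_{\theta,\xi}(z_0+\delta(z_0)-h)=\bigl[\psi_{0,k_0}(z_0+\delta(z_0))-\theta\bigr]-\bigl(\psi_{0,k_0}'(z_0+\delta(z_0))+\xi\bigr)h+o(h)$ is only useful on a window of size $h\sim 1/\xi$, but on that window $\varphi'$ changes by an amount of order $\xi\cdot h\sim 1$ (and $\varphi''$ scales like $\xi$ through the nonlinear terms of \eqref{eq-self-similar}), so the $o(h)$ term is emphatically not uniform in $\xi$ and may be comparable to the linear term you want to exploit. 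The paper avoids this entirely: it observes that for $p$ large the explicit linear function $w(z)=-p(z-(z_0+\delta(z_0)))+\theta$ is a \emph{supersolution} on an interval where it crosses $\psi_{0,k_0}$, and then a sliding argument forces $\varphi_{\theta,\xi}$ (for $\xi>p$) to cross $\psi_{0,k_0}$ as well. No quantitative remainder control is needed; you should adopt this comparison instead of the Taylor expansion.

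Second, your disjointness argument does not actually establish $A\cap B=\emptyset$. Showing $\varphi_{\theta,\xi}\geq\psi_{0,k_0}>\tilde\psi$ on $(z_0,z^\star]$ says nothing about $(z^\star,z_0+\delta(z_0))$, where $\varphi_{\theta,\xi}$ is a priori free to dip below $\tilde\psi$ before returning to $\theta>\tilde\psi(z_0+\delta(z_0))$. The real content is that $\xi\in A\cap B$ would force $\varphi_{\theta,\xi}$ to cross one of the two barriers twice (entering and leaving the forbidden region), and this double crossing must be excluded. The paper does this explicitly with a sliding argument in its proof that $\Xi_1\cap\Xi_2=\emptyset$; you need the analogous step. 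Related to this, your appeals to Proposition~\ref{prop:ode_comp} assume $\varphi_{\theta,\xi}$ is decreasing, which it need not be for general $\xi$; the paper handles this delicately using the no-positive-local-minimum property and bespoke sliding arguments in each spot, and you should be similarly careful (your use of the one-sided comparison at the largest touching point $z^\star$ happens to land in a regime where monotonicity can be recovered, but this must be stated).
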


\begin{proof} Note that, by a strong maximum type argument, it is enough to find $\xi$ such that
$\tilde{\psi} \leq \varphi_{\theta,\xi} \leq \psi_{0,k_0}$.  We define 
$$
\Xi_1 := \{ \xi \in \R  :  \varphi_{\theta, \xi} < \tilde{\psi} \mbox{ at some point in $(z_0,z_0 +\delta (z_0))$}\},
$$
$$
\Xi_2 := \{ \xi \in \R :  \varphi_{\theta, \xi} > \psi_{0,k_0} \mbox{ at some point in $(z_0,z_0 +\delta (z_0))$}\}.
$$

We first prove that neither set is empty.  Indeed, in view of the ODE \eqref{eq-self-similar} a solution cannot reach a positive minimum and thus any $\xi <0$ enforces the solution $\varphi_{\theta,\xi}$ to cross $\tilde \psi$, so that $(-\infty,0)\subset \Xi _1$. On the other hand a direct computation shows that, if the slope $p>0$ is large enough, then the linear function $w(z):=-p(z-(z_0+\delta (z_0)))+\theta$ is a supersolution on an interval $(z_p,z_0+\delta (z_0))$ where it crosses $\psi _{0,k_0}$ exactly once. Now choose $\xi >p$, and let us prove that $\varphi_{\theta,\xi}$ also crosses $\psi_{0,k_0}$ in the same interval. To do so, we use a sliding argument: notice that the solution $\varphi_{\theta,\xi}$ may not be decreasing, so that Proposition~\ref{prop:ode_comp} does not apply directly. Nonetheless, we proceed by contradiction and assume that $\varphi_{\theta,\xi} \leq \psi_{0,k_0}$ in $(z_p,z_0 + \delta (z_0))$. In particular, the supremum of $\varphi_{\theta,\xi}$ in the same interval is less than $w (z_p)$. One can then reproduce the exact same argument as in the proof of Proposition~\ref{prop:ode_comp} to obtain a critical shift $a_0$ such that $w (\cdot - a_0) - \varphi_{\theta,\xi}$ admits a zero minimum in the interval $(z_p + a_0, z_0 +\delta (z_0))$. This contradicts the differential inequality and equality satisfied by both functions $w$ and $\varphi_{\theta, \xi}$. It follows that $\Xi_2$ is not empty either.

By continuity of the solutions of the ODE w.r.t. the slope parameter $\xi$, $\Xi_1$ and $\Xi _2$ are open sets.

 Also, by a comparison argument, if $\xi \in \Xi_1$ then $(- \infty,\xi] \subset \Xi_1$. Indeed, choose any $\xi '< \xi$ and assume by contradiction that $\varphi_{\theta,\xi '} \geq \tilde{\psi}$ on the left of $z_0$. In particular, $\varphi_{\theta,\xi'}$ blows up at some point $z_{\xi '} \geq z_0$ and, since as we explained above it cannot reach a positive minimum, it has to be decreasing on its interval of definition. We again use a sliding argument and find some $a_0 >0$ so that the function $\varphi_{\theta,\xi'} (\cdot - a_0) - \varphi_{\theta,\xi}$ reaches a zero minimum inside the interval $(z_{\xi'}+a_0,z_0 + \delta (z_0))$. This gives a contradiction and we conclude that $\xi' \in \Xi_1$.

By another comparison argument, we also show that $\xi \in \Xi_2$ implies $[\xi,+\infty) \subset \Xi_2$. Indeed, choose $\xi ' > \xi$ and assume by contradiction that $\varphi_{\theta,\xi '} \leq \psi_{0,k_0}$. Using again the fact that solutions may not admit a positive minimum, we conclude that $\varphi_{\theta,\xi}$ is decreasing on some interval $(z_{\xi}, z_0 + \delta (z_0))$ where it crosses exactly once $\psi_{0,k_0}$. In particular, $\varphi_{\theta,\xi} (z_{\xi})$ is larger than the supremum of $\varphi_{\theta,\xi'}$ over the interval $(z_{\xi}, z_0 +\delta (z_0))$. Proceeding as above, one can extend the proof of Proposition~\ref{prop:ode_comp}  and reach a contradiction.

Finally, we conclude that there exist $\xi_1 \leq \xi_2$ such that 
$\Xi_1 = (-\infty,\xi_1)$, $\Xi_2 = (\xi_2,+\infty)$.

Now we use a sliding argument to prove that  $\Xi_1 \cap \Xi_2 = \emptyset$: if this is not true then we are equipped with  a solution $\varphi_{\theta,\xi}$ which crosses twice either $\psi_{0,k_0}$ or $\tilde{\psi}$. Consider the former case, there is $z_1 < z_2$ such that $\varphi_{\theta,\xi}-\psi_{0,k_0}> 0$ on $(z_1,z_2)$ and $\varphi_{\theta,\xi}-\psi_{0,k_0}= 0$ on $\{z_1,z_2\}$. Then we can define
$$
a_0:=\inf \{a>0: \varphi_{\theta,\xi}\leq \psi_{0,k_0}(\cdot-a) \mbox{ in } (\max\{ z_1, z_0 + a\},z_2) \}>0.
$$
Hence, thanks to the monotonicity of $\psi_{0,k_0}$ and the fact that $\psi_{0,k_0}(z)\to +\infty$ as $z \to z_0$, we conclude that $\psi_{0,k_0}(\cdot-a_0)-\varphi_{\theta,\xi}$ reaches a zero minimum value at some point, which is contradicted by the differential inequality for the supersolution $\psi_{0,k_0}(\cdot- a_0)$ and the ODE for the solution $\varphi_{\theta,\xi}$. Therefore, it remains to rule out the case when $\varphi_{\theta,\xi}$ only crosses $\psi_{0,k_0}$ once, and $\tilde{\psi}$ twice. Recalling that $\varphi_{\theta,\xi}$ cannot change monotonicity more than once, it then follows that it is a decreasing function. A straightforward use of Proposition~\ref{prop:ode_comp} (which is now applicable) contradicts the fact that $\psi_{0,k_0}$ crosses $\tilde{\psi}$ twice. Finally $\xi_1 \leq \xi_2$ and the claim follows by taking any $\xi \in [\xi_1 , \xi_2]$.
\end{proof}

Next we introduce 
$$\theta_* = \sup \{ \theta: \exists \xi \mbox{ s.t. the solution } \varphi_{\theta,\xi} < \psi_{0,K_0} \mbox{ in } (z_0,z_0 + \delta (z_0)] \mbox{ and blows up at $z_0$}\}.$$
The set of such $\theta$ is not empty by the above claim (recall that $\psi_{0,k_0} < \psi_{0,K_0}$), and obviously it is bounded from above as the inequality fails when $\theta \geq \psi_{0,K_0} (z_0 + \delta (z_0))$. It follows that $\theta_*$ is well-defined and finite. Now take sequences
$$\theta_n \nearrow \theta_* \mbox{ and } \xi_n,$$
where $\xi_n$ is chosen so that $\varphi_{\theta_n,\xi_n}$ blows up at $z_0$ and lies below $\psi_{0,K_0}$. 
\begin{cla}
The sequence $\xi_n$ is bounded. Hence, up to extraction of a subsequence, $\xi_n \to \xi_*$.
\end{cla}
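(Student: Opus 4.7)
The plan is to show that $(\xi_n)$ is bounded both below and above, and then extract a convergent subsequence by Bolzano--Weierstrass.

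For the lower bound, I would first establish that $\varphi_n$ is monotonically decreasing on $(z_0, z_0+\delta(z_0)]$. Indeed, a solution of \eqref{eq-self-similar} cannot admit a positive minimum: at such a point the left-hand side would be $-\varphi/(\beta-1) < 0$ while the right-hand side $(\varphi^m)'' + r\varphi^\beta$ would be strictly positive (since $(\varphi^m)'' \geq 0$ at a minimum). Since $\varphi_n$ goes continuously from $+\infty$ at $z_0^+$ to the finite value $\theta_n$ at $z_0+\delta(z_0)$, this rules out non-monotonicity, whence $-\xi_n = \varphi_n'(z_0+\delta(z_0)) \leq 0$.

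For the upper bound, fix any $\epsilon \in (0, \delta(z_0))$ and set $M_\epsilon := \psi_{0,K_0}(z_0+\delta(z_0)-\epsilon)$. Since $\varphi_n \leq \psi_{0,K_0}$ by assumption and $\varphi_n$ is decreasing, we have $\varphi_n \leq M_\epsilon$ on $[z_0+\delta(z_0)-\epsilon, z_0+\delta(z_0)]$, uniformly in $n$. The mean value theorem then produces some $\zeta_n$ in this interval with $|\varphi_n'(\zeta_n)| \leq M_\epsilon/\epsilon$. Setting $u_n := -\varphi_n' \geq 0$ and isolating $\varphi_n''$ in the ODE through the chain rule $(\varphi_n^m)'' = m\varphi_n^{m-1}\varphi_n'' + m(m-1)\varphi_n^{m-2}(\varphi_n')^2$, a direct computation yields
\[
u_n' = -\frac{1-m}{\varphi_n}\,u_n^2 \;-\; \frac{(\beta-m)\,z\,\varphi_n^{1-m}}{2m(\beta-1)}\,u_n \;+\; \frac{\varphi_n^{2-m}}{m(\beta-1)} \;+\; \frac{r}{m}\,\varphi_n^{\beta+1-m}.
\]
Since the first two terms are non-positive (for $u_n \geq 0$ and $z > 0$) and the last two are bounded above by constants depending only on $M_\epsilon$ and the problem parameters, using $\varphi_n \leq M_\epsilon$ in the coefficient $(1-m)/\varphi_n$ yields the Riccati-type differential inequality $u_n' \leq -A\,u_n^2 + B$ on $[\zeta_n, z_0+\delta(z_0)]$, with constants $A, B > 0$ independent of $n$.

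A standard comparison with the scalar autonomous equation $v' = -Av^2 + B$, whose positive equilibrium $\sqrt{B/A}$ is globally attracting, yields $u_n(z) \leq \max\bigl(u_n(\zeta_n), \sqrt{B/A}\bigr)$ on $[\zeta_n, z_0+\delta(z_0)]$. Evaluating at $z_0+\delta(z_0)$ gives the uniform upper bound $\xi_n \leq \max(M_\epsilon/\epsilon, \sqrt{B/A}) =: C$; combined with $\xi_n \geq 0$, Bolzano--Weierstrass extracts the desired convergent subsequence $\xi_n \to \xi_*$. The main delicate point is the derivation of the Riccati inequality, where one must correctly track the sign of each term (notably of the drift coming from $\frac{\beta-m}{2}z\varphi_n'$, which is favorable precisely because $z > 0$ in the blow-up region), so that after discarding the non-positive contributions one is left with the clean dissipative structure $-A u_n^2 + B$.
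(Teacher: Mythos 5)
Your proof is correct, but it takes a genuinely different route from the paper's. The paper proves the upper bound on $\xi_n$ indirectly: it first shows, via a sliding/comparison argument in the spirit of Proposition~\ref{prop:ode_comp} and the impossibility of an interior positive minimum, that $\varphi_{\theta_n,\xi}$ stays below $\psi_{0,K_0}$ for every $\xi\le\xi_n$; then, assuming $\xi_n\to+\infty$ and passing to the limit $\theta_n\to\theta_*$ by continuous dependence on the data, it deduces that $\varphi_{\theta_*,\xi}\le\psi_{0,K_0}$ for \emph{all} $\xi$, contradicting the nonemptiness of $\Xi_2$ established in the proof of Claim~\ref{claim1}. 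You instead obtain a \emph{quantitative} a priori bound on $\xi_n$ directly from the ODE: setting $u_n=-\varphi_n'$, expanding $(\varphi_n^m)''$ by the chain rule and isolating $\varphi_n''$ turns~\eqref{eq-self-similar} into the Riccati-type relation you display, and after discarding the two favorably signed terms (which requires $u_n\ge0$, established by the no-positive-minimum observation, and $z>0$) and using the uniform bound $\varphi_n\le\psi_{0,K_0}\le M_\epsilon$ near $z_0+\delta(z_0)$, one gets $u_n'\le -A u_n^2+B$ with $A,B$ independent of $n$; a mean-value starting bound and the dissipativity of the Riccati field then trap $u_n$ up to the right endpoint. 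Both arguments are sound; the paper's is softer and reuses the comparison machinery it has already built, while yours is self-contained and produces an explicit bound on $\xi_n$, at the cost of an extra (but routine) computation. One small point to state explicitly in your write-up: the derivative bound $u_n(\zeta_n)\le M_\epsilon/\epsilon$ and the coefficient bound $\varphi_n\le M_\epsilon$ are both only valid on $[z_0+\delta(z_0)-\epsilon,\,z_0+\delta(z_0)]$, which is fine since $\zeta_n$ lies in that interval and you only propagate the Riccati estimate from $\zeta_n$ to $z_0+\delta(z_0)$.
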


\begin{proof} If $\xi _n <0$ the solution $\varphi_{\theta _n,\xi _n}$, which blows up at $z_0$, has to reach a positive minimum at some point, where we test the equation to reach a contradiction. Hence $\xi _n\geq 0$ and moreover $\varphi_{\theta_n,\xi_n}$ is a decreasing function. Using yet another comparison argument, we obtain that for any $\xi \leq \xi_n$ the solution $\varphi_{\theta_n, \xi}$ lies below $\psi_{0,K_0}$: if not and using again the fact that solutions cannot reach a positive minimum at any point, then $\varphi_{\theta_n,\xi}$ has to be decreasing in some left interval of $z_0 + \delta (z_0)$ where it crosses $\psi_{0,K_0}$, and applying Proposition~\ref{prop:ode_comp} to the pair of functions $\varphi_{\theta_n,\xi_n}$ and $\varphi_{\theta_n,\xi}$ one immediately reaches a contradiction.

Now proceed by contradiction and assume that $\xi_n \to +\infty$ (even up to extraction of a subsequence). Then by a limiting argument we get that for any $\xi \in \mathbb{R}$, the solution $\varphi_{\theta_*,\xi}$ lies below $\psi_{0,K_0}$, which is impossible (see the non emptiness of $\Xi_2$ in the proof of Claim~\ref{claim1}). The claim is proved.
\end{proof}

We let $ n \to +\infty$, and by continuity of the solutions of the ODE, we have 
$$
\varphi_{\theta_*, \xi_*} \leq \psi_{0,K_0}\quad \text{ in } (z_0,z_0+\delta (z_0)].
$$
Let us check that $\varphi_{\theta_*,\xi_*}$ blows up at $z_0$. Clearly the above inequality implies that it cannot blow up on the right of $z_0$. Now proceed by contradiction and assume that $\varphi_{\theta_*,\xi_*}$ is finite at $z_0$. Using again the continuity of solutions of the ODE, we have a small open neighborhood of $(\theta_*,\xi_*)$ such that the solution is again finite at $z_0$: this is a clear contradiction with our construction. We conclude, as announced, that $\varphi_{\theta_*,\xi_*}$ blows up at $z_0$.

As we have pointed out several times, if $z_*$ is a critical point of any (positive) solution, the equation \eqref{eq-self-similar} yields that $z_*$ is a strict local maximum point. In particular, blow-up cannot occur on the left of~$z_*$, and hence the following holds.

\begin{cla}
The function $\varphi_{\theta_*,\xi_*}$ is decreasing on its interval of definition.
\end{cla}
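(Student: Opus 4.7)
The plan is to exploit the structural observation stated in the paragraph immediately preceding the claim: at any critical point $z_*$ of a positive solution $\varphi$ of \eqref{eq-self-similar}, evaluating the ODE at $z_*$ yields
$$
(\varphi^m)''(z_*) = -\frac{1}{\beta-1}\varphi(z_*) - r\varphi^\beta(z_*) < 0,
$$
so $\varphi''(z_*)<0$ since $\varphi(z_*)>0$, meaning $z_*$ is automatically a strict local maximum. The main consequence I would extract from this is that \emph{no positive solution of \eqref{eq-self-similar} can have more than one critical point}: if there were two distinct critical points, the intermediate value theorem applied to $\varphi'$ would force a third critical point between them that is a local minimum, contradicting the observation above.

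I would then argue by contradiction and assume that $\varphi:=\varphi_{\theta_*,\xi_*}$ has a critical point $z_*$ in its interval of definition $(z_0,z_{\max})$. By the previous step $z_*$ is the unique critical point, so $\varphi$ is strictly monotone on each of the intervals $(z_0,z_*)$ and $(z_*,z_{\max})$. Because $z_*$ is a strict local maximum, $\varphi(z)<\varphi(z_*)$ for $z$ slightly less than $z_*$, which forces $\varphi$ to be \emph{increasing} on $(z_0,z_*)$. But then $\varphi \leq \varphi(z_*)<+\infty$ throughout $(z_0,z_*)$, contradicting the already established blow-up of $\varphi$ at $z_0^+$. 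Hence no critical point exists.

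It follows that $\varphi$ is strictly monotone on its entire interval of definition. Since $\varphi$ is continuous on $(z_0,z_{\max})$, blows up at $z_0^+$, and takes the finite value $\theta_*$ at $z_0+\delta(z_0)$, the monotonicity can only be decreasing. I do not anticipate any real obstacle in writing this out: the argument is a direct synthesis of the critical-point observation (which the authors have already highlighted) with the blow-up behaviour at $z_0$ that was secured in the preceding subclaims. The only care needed is to phrase the "at most one critical point" step cleanly, perhaps by noting that between two strict local maxima the continuous function $\varphi$ must attain its minimum on the closed interval between them at an interior point, which would be a local minimum and hence a critical point that is not a strict local maximum.
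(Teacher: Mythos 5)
Your proof is correct and follows essentially the same route the paper takes: the ODE forces every critical point of a positive solution to be a strict local maximum, and this is incompatible with the (already established) blow-up of $\varphi_{\theta_*,\xi_*}$ at $z_0^+$, so the solution has no critical point and, blowing up on the left, must be decreasing. You spell out the intermediate ``at most one critical point'' step more explicitly than the paper, which condenses it into the remark that blow-up cannot occur to the left of a critical point, but the substance of the argument is identical.
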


Before we proceed, let us extend the previous upper inequality: we show that 
\begin{equation}\label{enlarged}
\varphi_{\theta_*, \xi_*} < \psi_{0,K_0}\quad \text{ in } (z_0, z_0 + \kappa  z_0),
\end{equation}
where $\kappa$ comes from \eqref{kappa} and is such that $\psi_{0,K_0}$ is still a subsolution in this enlarged interval $(z_0,z_0+\kappa z_0)$. Assume by contradiction that there is a contact point $z_1 \in (z_0, z_0 + \kappa z_0)$ and without loss of generality that $\varphi_{\theta_*,\xi_*} \leq \psi_{0,K_0}$ in $(z_0,z_1)$. Since we are equipped with a strict subsolution and a solution we also have that $\varphi_{\theta_*,\xi_*}\not \equiv \psi_{0,K_0}$ on $(z_0,z_1)$. Proposition~\ref{prop:ode_comp_blowup} immediately contradicts the fact that both functions blow up at the same point $z_0$. Therefore \eqref{enlarged} holds.
\medskip

We are now ready to prove that $\varphi_{\theta_* , \xi_*}$ blows up with the appropriate behaviour, in the sense that
\begin{equation}\label{step}
\varphi_{\theta_* ,\xi_*} \geq \psi_{0,k_0}\quad \text{ in } (z_0,z_0 +\delta (z_0)).
\end{equation}

We proceed again by contradiction and assume that $\varphi_{\theta_*,\xi_*} (z_0 + \delta ') < \psi_{0,k_0} (z_0 + \delta')$ for some $\delta ' \in (0,\delta (z_0))$. The idea is to show that $\varphi_{\theta_*,\xi_*}$ falls into the value range (between $\tilde{\psi}$ and $\psi_{0,k_0}$) where blow-up is expected with \lq\lq slow'' asymptotics, as the solutions we have constructed in Claim \ref{claim1}. Therefore, small perturbations should also be in the same value range, leading to a contradiction with the \lq\lq critical'' choice of $\theta_*$.

First, due to Proposition~\ref{prop:ode_comp_blowup}, the function $\varphi_{\theta_*,\xi_*}$ cannot intersect $\psi_{0,k_0}$ in $(z_0,z_0 +\delta ')$ (it would contradict the fact that blow-up occurs at $z_0$). It follows that 
$$
\varphi_{\theta_*, \xi_*} < \psi_{0,k_0}\quad \text{ in } (z_0,z_0 + \delta ').
$$
Note that we can write a strict inequality thanks to the strong maximum principle. In a similar fashion, as we know that $\varphi_{\theta_*, \xi_*} (z_0 + \delta) >  \tilde{\psi} (z_0 + \delta (z_0))$, and as it cannot intersect $\tilde{\psi}$, we have
$$\varphi_{\theta_*,\xi_*} > \tilde{\psi}\quad \text{ in } (z_0,z_0 + \delta (z_0)).
$$
We again used Proposition~\ref{prop:ode_comp_blowup}, and a strong maximum principle for this inequality to be strict.

Now consider $\hat{\varphi}$ the solution of the ODE \eqref{eq-self-similar} with initial conditions
$$
\hat{\varphi} (z_0 + \delta' ) = \hat{\theta}, \qquad  \hat{\varphi} '(z_0 + \delta ' ) = - \hat{\xi}.
$$
Here we choose $\hat{\theta} > \varphi_{\theta_*,\xi_*}(z_0+\delta')$ but very close, and by a similar argument as that of Claim \ref{claim1}, we can find a $\hat{\xi}$ such that $\hat{\varphi}$ remains between $\tilde{\psi}$ and $\psi_{0,k_0}$ in the interval $(z_0 , z_0 + \delta ')$. Since both solutions blow up at the same point and by Proposition~\ref{prop:ode_comp_blowup}, we have that $\hat{\varphi}$ and $\varphi_{\theta_*, \xi_*}$ cannot intersect. In particular, noting that $\hat{\varphi}$ is also defined on the right of $z_0 + \delta '$, we get that 
$$
\theta_* = \varphi_{\theta_* , \xi_*} (z_0 + \delta (z_0)) < \hat{\varphi} (z_0 + \delta (z_0)).
$$
In a similar fashion, it follows from Proposition~\ref{prop:ode_comp_blowup} and the fact that $\hat \varphi \leq \psi_{0,k_0} < \psi_{0,K_0}$ in a neighborhood of $z_0$, that $\hat \varphi < \psi_{0,K_0}$ in the whole interval $(z_0,z_0 + \delta (z_0)]$. Therefore the existence of the function $\hat \varphi$ contradicts our choice of $\theta_*$, and this contradiction concludes the proof of \eqref{step}.

\medskip

We sum up our result in the next proposition.
\begin{prop}[Blow-up solutions]\label{prop:blow-up}
For any $z_0 >0$, any $k_0 < C_0 < K_0$, there exists a decreasing solution $\varphi_{z_0}$ of the ODE \eqref{eq-self-similar} such that
\begin{equation}\label{validity}
\frac{k_0}{(z-z_0)^{\frac{1}{\beta-1}}} \leq \varphi_{z_0} (z) \leq \frac{K_0}{(z-z_0)^{\frac{1}{\beta-1}}}
\end{equation}
in some right neighborhood of $z_0$.

Furthermore, when $\gamma k_0 = \frac{K_0}{\gamma} = C_0$ with $\gamma >1$, the upper inequality holds true on an explicit interval, see \eqref{kappa} and \eqref{enlarged}. The lower inequality holds in an interval $(z_0,z_0 + \delta (z_0))$, see \eqref{step}, where $\delta (z_0)$ can be chosen to be continuous with respect to $z_0$.
\end{prop}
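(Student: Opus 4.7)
The plan is to define $\varphi_{z_0} := \varphi_{\theta_*,\xi_*}$, the solution already extracted through the two-parameter shooting procedure carried out above, and to verify each conclusion of the proposition by assembling the intermediate facts from the subsection.

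First I would reduce to the symmetric case $k_0 = C_0/\gamma$, $K_0 = \gamma C_0$ with $\gamma>1$: indeed for arbitrary $k_0 < C_0 < K_0$ I may choose $\gamma$ small enough that $C_0/\gamma \geq k_0$ and $\gamma C_0 \leq K_0$, and then prove the tighter inequalities with $k_0' := C_0/\gamma$ and $K_0' := \gamma C_0$. Next I would invoke Claim~\ref{claim1} to produce, for every admissible $\theta$, a slope $\xi$ such that the ODE solution with data $(\theta,-\xi)$ at $z_0+\delta(z_0)$ is trapped between $\tilde\psi$ and $\psi_{0,k_0}$ on $(z_0, z_0+\delta(z_0)]$, hence blows up at $z_0$. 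Setting $\theta_*$ to be the supremum of those $\theta$ for which a trapping slope keeps the solution below $\psi_{0,K_0}$, I would use the second claim (boundedness of the corresponding $\xi_n$) to extract a limit $\xi_*$, and pass to the limit via continuous dependence on initial data. This yields a decreasing solution $\varphi_{\theta_*,\xi_*}\leq \psi_{0,K_0}$ on $(z_0, z_0+\delta(z_0)]$ that still blows up at $z_0$.

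To upgrade the upper inequality to the enlarged interval $(z_0, z_0 + \kappa z_0)$ from \eqref{kappa}, where $\psi_{0,K_0}$ remains a strict subsolution, I would apply Proposition~\ref{prop:ode_comp_blowup} to the pair $(\varphi_{\theta_*,\xi_*}, \psi_{0,K_0})$: since both blow up at $z_0$, any contact point strictly to the right would force the two functions to coincide, contradicting the strictness of the subsolution inequality. For the lower inequality on $(z_0, z_0+\delta(z_0))$, I would argue by contradiction: if $\varphi_{\theta_*,\xi_*}(z_0+\delta')<\psi_{0,k_0}(z_0+\delta')$ at some $\delta'\in(0,\delta(z_0))$, then reshooting from $z_0+\delta'$ with initial value slightly above $\varphi_{\theta_*,\xi_*}(z_0+\delta')$ (via Claim~\ref{claim1}) would produce a blow-up solution $\hat\varphi$ trapped between $\tilde\psi$ and $\psi_{0,k_0}$. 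Proposition~\ref{prop:ode_comp_blowup} prevents $\hat\varphi$ and $\varphi_{\theta_*,\xi_*}$ from intersecting, so $\hat\varphi(z_0+\delta(z_0))>\theta_*$, contradicting the maximality of $\theta_*$.

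The step I expect to be most delicate is the last one: I need to ensure that the perturbation $\hat\varphi$ really does blow up exactly at $z_0$ rather than drifting to some slightly larger blow-up point, while simultaneously its value at $z_0+\delta(z_0)$ strictly exceeds $\theta_*$. Both requirements hinge on combining Claim~\ref{claim1} with the strict version of Proposition~\ref{prop:ode_comp_blowup}, and on the strictness of all subsolution inequalities; a soft limiting argument would not suffice here since the perturbation must preserve blow-up at a precise location. The announced continuous dependence of $\delta(z_0)$ on $z_0$ is then a by-product: $\delta(z_0)$ is determined by the supersolution computation for $\psi_{0,k_0}$, whose validity interval varies continuously with the coefficients, themselves continuous in $z_0$.
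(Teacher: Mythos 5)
Your plan follows the paper's proof essentially step by step: reduce to the symmetric case $k_0=C_0/\gamma$, $K_0=\gamma C_0$; use Claim~\ref{claim1} to build blow-up solutions trapped between $\tilde\psi$ and $\psi_{0,k_0}$; define $\theta_*$ as the critical threshold below $\psi_{0,K_0}$; extract a limiting solution via the boundedness of the slopes and continuous dependence; extend the upper bound to $(z_0,z_0+\kappa z_0)$ and obtain the lower bound by contradiction, both via Proposition~\ref{prop:ode_comp_blowup}. This is the same route as the paper.

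One point in the final contradiction deserves to be made explicit. Establishing $\hat\varphi(z_0+\delta(z_0))>\theta_*$ is not by itself inconsistent with the maximality of $\theta_*$: you must also verify that $\hat\varphi$ belongs to the defining set, i.e.\ that $\hat\varphi<\psi_{0,K_0}$ holds on all of $(z_0,z_0+\delta(z_0)]$, not merely on $(z_0,z_0+\delta')$ where the trapping between $\tilde\psi$ and $\psi_{0,k_0}$ was arranged. This extra inequality is supplied by a second application of Proposition~\ref{prop:ode_comp_blowup}, using that $\hat\varphi\le\psi_{0,k_0}<\psi_{0,K_0}$ near $z_0$ and that both $\hat\varphi$ and $\psi_{0,K_0}$ blow up at $z_0$. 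You correctly flagged this as the delicate spot, but the missing ingredient is specifically this global comparison, rather than the preservation of the blow-up point, which is already guaranteed by the trapping construction.
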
  

\begin{rem}
In particular, the above statement implies that, given a sequence $z_n \to z_0$, the associated solutions $\varphi_{z_n}$ satisfy estimates on intervals that \lq\lq do not disappear'' when passing to the limit. In particular, any limit of the solutions $\varphi_{z_n}$ (which exists by usual estimates, up to extraction of a subsequence) has the wanted blow-up behaviour too.
\end{rem}

Let us now turn to the estimates on the  derivatives of $\varphi_{z_0}$, that is \eqref{to-be-proved-2abis} and \eqref{to-be-proved-2b}. Take any $z_1 \in (z_0,z_0 + \delta (z_0))$ close enough to $z_0$ such that $ \psi_{0,k_0} (z_1)<\varphi_{z_0} (z_1) < \psi_{0,K_0} (z_1)$ and $\varphi_{z_0} (z_1) > \psi_{0,K_0} (z_0 + \delta (z_0))$. Note that these inequalities can be made strict, up to an arbitrarily small change in the constants $k_0$, $K_0$, which has no incidence here. Then there exists some shift $b\in (0,z_0 + \delta (z_0) - z_1)$ such that 
\begin{equation}
\label{shift-b}
\psi_{0,K_0} (z_1 + b) = \varphi_{z_0} (z_1),
\end{equation}
and moreover $\psi_{0,K_0} (\cdot + b)$ satisfies the differential inequality
$$-\frac{1}{\beta-1}\left(\psi_{0,K_0} (z+b)+\frac{\beta-m}{2}(z+ b) \psi_{0,K_0} '(z+b)\right) < (\psi_{0,K_0}^m)''(z+b)+r\psi_{0,K_0}^\beta(z+b) , $$
and thus
$$-\frac{1}{\beta-1}\left(\psi_{0,K_0} (z+b)+\frac{\beta-m}{2}z  \psi_{0,K_0} '(z+b)\right) < (\psi_{0,K_0}^m)''(z+b)+r\psi_{0,K_0}^\beta(z+b) , $$
at least in $(z_0,z_0 + \delta (z_0))$. From this we have
\begin{equation}
\label{claim-pentes}
\varphi_{z_0} ' (z_1) < \psi_{0,K_0} ' (z_1 + b).
\end{equation}
Indeed, if \eqref{claim-pentes} is not true, then by Proposition~\ref{prop:ode_comp} and the continuity of solutions with respect to boundary conditions (see Remark~\ref{rem:comp1}), we conclude that $\varphi_{z_0} \leq \psi_{0,K_0} (\cdot + b)$ on the left of $z_1$, which contradicts the fact that it blows up at $z_0$.
Now from $\psi_{0,K_0} ' =  -\frac{K_0^{1-\beta}}{\beta-1} \psi_{0,K_0}^\beta$ and \eqref{shift-b}, inequality~\eqref{claim-pentes} is recast as
$$
\varphi_{z_0} ' (z_1) < - \frac{K_0^{1-\beta}}{\beta-1}( \varphi_{z_0} (z_1))^\beta,
$$
and is true for any $z_1$ close enough to $z_0$. Repeating the same argument with the supersolution $\psi_{0,k_0}$, we reach the conclusion that
\begin{equation}\label{reach}
- \frac{k_0^{1-\beta}}{\beta-1}( \varphi_{z_0} (z_1))^\beta < \varphi_{z_0} ' (z_1) < - \frac{K_0^{1-\beta}}{\beta-1}( \varphi_{z_0} (z_1))^\beta,
\end{equation}
which concludes the proof of \eqref{to-be-proved-2abis}.

We now prove \eqref{to-be-proved-2b}. Let $\delta>0$ be given. From \eqref{eq-self-similar} and straightforward computations we deduce that
$$
m\varphi ''(z) \varphi ^{m-\beta-1}(z)=-\frac{1}{\beta-1}\frac{1}{\varphi^{\beta-1}(z)}+m(1-m)\varphi^{m-2-\beta}(z)(\varphi ')^2(z)-\frac{\beta-m}{2(\beta-1)}z\varphi'(z)\varphi^{-\beta}(z)-r.
$$
From \eqref{reach} and the expression of $C_0$ in \eqref{def:C0}, we see that, if $\max(K_0-C_0,C_0-k_0)$ is small enough, then $\vert -\frac{\beta-m}{2(\beta-1)}z \varphi'(z)\varphi^{-\beta}(z)-r\vert \leq \frac{\delta m}{2}$ in a right neighborhood of $z_0$, say $(z_0,z_1)$. Next, from~\eqref{reach} again and the blow-up of $\varphi_{z_0}$ at $z_0$, we see that the first two terms in the above right hand side can be made small for any $z\in(z_0,z_1)$, up to reducing $z_1$. This concludes the proof of \eqref{to-be-proved-2b}.

\subsection{Behaviour of $\varphi_{z_0}$ at infinity}\label{ss:beh-infinity}

We will next investigate the behaviour of $\varphi_{z_0}$ as $z \to +\infty$, depending on the choice of the blow-up point $z_0$. Let us first introduce some sub and supersolutions decaying to 0.

For any $C>0$, we define
$$
\psi_{\infty,C} (z) := \frac{C}{z^{\frac{2}{1-m}}}.
$$
A straightforward computation shows that there is $C_\infty=C_\infty(m)>0$ so that $\psi_{\infty,C_\infty}$ satisfies the ODE \eqref{eq-self-similar} without the reaction term (which is negligible at infinity), that is \eqref{eq-self-similar} with $r=0$. In particular, $\psi_{\infty,C_\infty}$ satisfies the differential inequality
$$
-\frac{1}{\beta-1} \left(\psi_{\infty,C_\infty} (z) + \frac{\beta-m}{2} z \psi_{\infty,C_\infty} ' (z) \right) <  (\psi_{\infty,C_\infty}^m)'' (z) + r \psi_{\infty,C_\infty}^\beta (z),
$$
thus being a strict subsolution to \eqref{eq-self-similar}. As before (see Proposition~\ref{prop:subsuper_shift}), any shift to the left, i.e. any $\psi_{\infty,C_\infty} (\cdot + Z)$ with $Z >0$, satisfies the same differential inequality. We choose $Z$ such that the shifted function intersects the vertical axis with a value larger than $\gamma \left(\frac{\beta-m}{2 \kappa r(\beta-1)^2}\right)^{\frac{1}{\beta-1}}$, i.e.
\begin{equation}\label{gamma1}
\psi_{\infty,C_\infty} (Z) > \gamma \left(\frac{\beta-m}{2 \kappa r(\beta-1)^2}\right)^{\frac{1}{\beta-1}},
\end{equation}
where $\gamma >1$ is the scaling parameter defined in subsection \ref{ss:bup}, namely $k_0=\frac{C_0}{\gamma}$, $K_0=\gamma C_0$. We denote by $\psi_\infty=\psi_{\infty,C_\infty}(\cdot+Z)$ the obtained strict subsolution.

On the other hand, we also get that $\psi_{\infty,C}$ is a supersolution when $C > C_\infty$, and a subsolution when $C < C_\infty$, at least when $\psi_{\infty,C}(z)$ is small enough or equivalently when $z$ is large. Again we recall that, from {Proposition~\ref{prop:subsuper_shift}, any shift of a supersolution to the right remains a supersolution, while any shift to the left of a subsolution remains a subsolution. We will continue to use this extensively.

We first look, in the next two propositions, at the case when the blow-up point $z_0$ is large.

\begin{prop}[Far away slopes for large blow-up points]\label{prop:far-away}
Let $\eta >0$ be given. For any $z_0 >0$, let us define the point $z_1>z_0$ where 
$$
\varphi_{z_0} (z_1) = \eta.
$$
Then $\varphi_{z_0} ' (z_1)$ remains bounded as $z_0 \to +\infty$.
\end{prop}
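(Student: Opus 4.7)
The plan is to bound $|\varphi_{z_0}'(z_1)|$ uniformly by combining the blow-up sandwich estimates of Proposition~\ref{prop:blow-up} (applied with the particular choice $k_0=C_0/\gamma$, $K_0=\gamma C_0$) with a shifted-supersolution comparison via Proposition~\ref{prop:ode_comp_blowup}. The heuristic is that $C_0=C_0(z_0)$, as given by~\eqref{def:C0}, grows like $z_0^{1/(\beta-1)}$, so the blow-up profile of $\varphi_{z_0}$ stretches horizontally as $z_0$ grows; consequently $\varphi_{z_0}$ should become flatter at the level set $\{\varphi=\eta\}$, and in fact one expects $\varphi_{z_0}'(z_1)\to 0$.

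First I would locate $z_1$ via the sandwich: from $\varphi_{z_0}\leq\psi_{0,K_0}$ on $(z_0,z_0(1+\kappa))$ granted by~\eqref{enlarged}, we get $z_1-z_0\leq(K_0/\eta)^{\beta-1}$ (which indeed fits inside $(0,\kappa z_0)$ for $z_0$ large, as both sides are linear in $z_0$ with a ratio depending only on $\gamma$, $\kappa$, $r$, $\eta$, $m$, $\beta$); from $\varphi_{z_0}\geq\psi_{0,k_0}$ on $(z_0,z_0+\delta(z_0))$, one has $z_1-z_0\geq(k_0/\eta)^{\beta-1}$ whenever the latter lies below $\delta(z_0)$. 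Then I introduce the rightward shift $\bar\psi(z):=\psi_{0,k_0}(z-a)$ with $a:=z_1-z_0-(k_0/\eta)^{\beta-1}\geq 0$; by Proposition~\ref{prop:subsuper_shift}, $\bar\psi$ is a decreasing supersolution blowing up at $z_0+a>z_0$, and by construction $\bar\psi(z_1)=\eta=\varphi_{z_0}(z_1)$. Apply Proposition~\ref{prop:ode_comp_blowup} to $\psi_1=\varphi_{z_0}$ (solution, blowing up at $z_0$) and $\psi_2=\bar\psi$ (supersolution, blowing up at $z_0+a$): the hypothesis $\varphi_{z_0}'(z_1)\leq\bar\psi'(z_1)$ would force the blow-up point of $\varphi_{z_0}$ to lie strictly to the right of $z_0+a>z_0$, a contradiction. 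Hence
\[
\varphi_{z_0}'(z_1) > \bar\psi'(z_1) = -\frac{\eta^\beta}{(\beta-1)\,k_0^{\beta-1}} = -\frac{\eta^\beta\gamma^{\beta-1}}{(\beta-1)\,C_0(z_0)^{\beta-1}},
\]
which, being $O(1/z_0)$, is bounded (and even tends to $0$) as $z_0\to+\infty$, proving the proposition.

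The main obstacle is to guarantee that $z_1$ actually lies in the subinterval where $\bar\psi$ is a strict supersolution, namely $(z_0+a,z_0+a+\delta(z_0+a))$; this requires $(k_0/\eta)^{\beta-1}<\delta(z_0+a)$, which is not automatic from the mere continuity of $\delta$. A safe workaround is to split into two regimes depending on whether $z_1$ remains inside the blow-up neighborhood of Proposition~\ref{prop:blow-up}: in the first case the slope estimate~\eqref{to-be-proved-2abis} directly yields the desired bound; in the complementary far-zone case one combines with the subsolution $\psi_\infty$ of~\eqref{gamma1} (using that leftward shifts preserve the subsolution property, see Proposition~\ref{prop:subsuper_shift}) to bracket $\varphi_{z_0}$ from below and transfer the slope estimate to the crossing point~$z_1$.
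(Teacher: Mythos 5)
Your main argument---shifting the supersolution $\psi_{0,k_0}$ to the right by $a=z_1-z_0-(k_0/\eta)^{\beta-1}$ and invoking Proposition~\ref{prop:ode_comp_blowup}---is essentially how the paper derives the blow-up slope estimate~\eqref{reach}/\eqref{to-be-proved-2abis}, and it does give a valid, $z_0$-uniform bound for $\eta$ large enough. But, as you partly anticipate, it breaks down for a fixed small $\eta$. The distance $z_1-z_0$ is then of order $(k_0/\eta)^{\beta-1}\sim z_0/\eta^{\beta-1}$, and the shifted function $\bar\psi=\psi_{0,k_0}(\cdot-a)$ is only known to be a supersolution on $(z_0+a,\,z_0+a+\delta(z_0)]$ with $\delta(z_0)<\kappa z_0$. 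Since $\kappa$ depends on $\gamma$ but not on $\eta$, for small $\eta$ one has $(k_0/\eta)^{\beta-1}>\delta(z_0)$, and even your preliminary localization $z_1-z_0\leq(K_0/\eta)^{\beta-1}$ fails: the ratio $(K_0/\eta)^{\beta-1}/z_0$ depends on $\eta$ and can exceed $\kappa$, so $z_1$ need not lie in the interval $(z_0,(1+\kappa)z_0)$ where \eqref{enlarged} holds. So the hypotheses of Proposition~\ref{prop:ode_comp_blowup} cannot be verified, and the argument only covers $\eta$ above some $\eta$-threshold.

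The proposed workaround for the complementary regime does not fill the gap. Your ``first case'' is just a restatement of the main argument. Your ``second case'' with $\psi_\infty$ cannot work: $\psi_\infty$ and its admissible (leftward) shifts reach the level $\eta$ at a bounded set of points, while $z_1\to+\infty$ as $z_0\to+\infty$, so there is no admissible shift of $\psi_\infty$ meeting $\varphi_{z_0}$ at $z_1$; rightward shifts would be needed, but those do not preserve the subsolution property by Proposition~\ref{prop:subsuper_shift}. Moreover, $\psi_\infty$ being a subsolution lying \emph{below} $\varphi_{z_0}$ gives the wrong direction for bounding $\varphi_{z_0}'(z_1)$ from below at a tangency point; what is needed there is a supersolution from above, which is exactly the object that your main argument fails to supply far from the blow-up point.

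The missing idea---which the paper uses---is a comparison with a \emph{fixed} reference solution $\varphi_{\hat z_0}$. Since $C_0(z_0)\to+\infty$, one can take $z_0>\hat z_0$ with $C_0/\gamma>\gamma\hat C_0$; then the slope estimates \eqref{reach}, applied to both $\varphi_{z_0}$ and $\varphi_{\hat z_0}$ at large levels, show that $\varphi_{z_0}$ is strictly less steep than $\varphi_{\hat z_0}$ on the common level sets $\{\varphi=\eta\}$ with $\eta$ large. A sliding argument (using that right shifts of the decreasing solution $\varphi_{\hat z_0}$ are supersolutions) then propagates this slope ordering down to every level $\eta>0$, giving the uniform bound $\varphi_{z_0}'(z_1)\geq\varphi_{\hat z_0}'(\hat z_1)$. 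Finally, note that the paper first shows, by integrating~\eqref{eq-self-similar}, that $\varphi_{z_0}$ decays to $0$ (so $z_1$ is indeed well defined for every $\eta>0$); your proposal takes that for granted.
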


\begin{proof}
Let us recall that $\varphi_{z_0}$ is decreasing. Hence $\varphi_{z_0}$ has to decay to some $l\geq 0$. Assume by contradiction that $l>0$, so that the solution exists until $+\infty$. Integrating~\eqref{eq-self-similar} from $z_*:=z_0+\delta (z_0)$ to $z$ we get, abbreviating $\varphi _{z_0}$ by $\varphi$,
\begin{eqnarray*}
(\varphi ^m)'(z)&=&(\varphi^m)'(z_*)-r\int _{z_*}^z\varphi^\beta(s)ds\\
&&-\frac{1}{\beta-1}\left(\frac{2-\beta+m}{2}\int_{z_*}^z\varphi(s)ds+\frac{\beta-m}{2}\left(z\varphi(z)-z_*\varphi(z_*)\right)\right).
\end{eqnarray*}
Since $2-\beta+m>0$ and $\beta-m>0$, letting $z\to +\infty$ we see that $\varphi'_{z_0}(z)\to -\infty$ as $z\to +\infty$, which is a contradiction. Hence $l=0$ and, in particular $z_1$ is well defined (uniquely) whatever the value of~$\eta$. It remains to find a lower bound on the derivative.

In order to obtain uniform bounds with respect to $z_0$, we first fix a reference point $\hat{z}_0 >0$, and define $\varphi_{\hat{z}_0}$ the corresponding solution constructed in the previous subsection, see Proposition \ref{prop:blow-up}. Then we know from \eqref{reach} that, on a neighborhood of $\hat{z}_0$ or equivalently, when $\varphi_{\hat{z}_0}$ is large enough, we have
$$
- \frac{{(\hat C_0/\gamma})^{1-\beta}}{\beta-1}( \varphi_{\hat{z}_0} (z))^\beta < \varphi_{\hat{z}_0} ' (z) < - \frac{(\gamma \hat C_0)^{1-\beta}}{\beta-1}( \varphi_{\hat{z}_0} (z))^\beta,
$$
where $\hat{C}_0=\hat C_0(\hat z_0)$ is the constant defined above, see \eqref{def:C0}, with $\hat{z}_0$ instead of $z_0$ (for which the constant is denoted $C_0$).

In the sequel, we work with $z_0>\hat z_0$ large enough so that
$$
\frac{C_0}{\gamma} > \gamma \hat{C}_0,
$$
which is possible since  $C_0 \to +\infty$ as $z_0 \to +\infty$, see \eqref{def:C0}.

From \eqref{reach}, we know that
$$
\varphi_{z_0} ' (z) >  - \frac{(C_0/\gamma)^{1-\beta}}{\beta-1}( \varphi_{z_0} (z))^\beta
$$
when $\varphi_{z_0} (z)$ is large enough. Now take $\eta$ large enough so that the above derivative estimate holds: this immediately gives a bound which does not depend on $z_0$ itself (recall that $1-\beta <0$).

It remains to consider the case when $\eta$ is not so large. This is where introducing $\varphi_{\hat{z}_0}$ turns out to be useful. Indeed, the three above inequalities imply that, for $\eta$ large enough, 
$$
\varphi_{\hat z_0}'(\hat z)< \varphi_{z_0}'(z)\leq 0,
$$
where $z$ and $\hat z$ are the points where respectively $\varphi _{z_0}$ and $\varphi_{\hat z_0}$ take the value $\eta$. In other words, $\varphi_{z_0}$ is less steep than $\varphi_{\hat{z}_0}$ at large enough values $\eta$. Let us show that this remains true at any value $\eta$. We proceed by contradiction and thus consider the largest $\eta_0>0$ such that
$$\varphi_{z_0} ' (z_1) = \varphi_{\hat{z}_0}' (\hat{z}_1),$$
where $z_1$ and $\hat{z}_1$ are the points where respectively $\varphi_{z_0}$ and $\varphi_{\hat{z}_0}$ take the value $\eta_0$. From the above,  $z_1 > \hat{z}_1$ must hold. In particular, by Proposition~\ref{prop:subsuper_shift} the shifted function $\varphi_{\hat{z}_0} (\cdot -  (z_1 - \hat{z}_1))$ is a supersolution of the ODE. Then, due to the degenerate zero of $\varphi_{z_0} - \varphi_{\hat{z}_0} (\cdot -  (z_1 - \hat{z}_1))$ at $z_1$, we have that it is nonnegative on a left neighborhood of $z_1$. But,
from the choice of $z_1$ above and since $\varphi_{z_0}$ is less steep than $\varphi_{\hat z_0}$ at values $\eta>\eta _0$, one must have $\varphi_{z_0} - \varphi_{\hat{z}_0} (\cdot -  (z_1 - \hat{z}_1))<0$ on the left of $z_1$. We have thus reached a contradiction. As announced, when $z_0$ is large, then $\varphi_{z_0}$ is  less steep than the fixed reference function $\varphi_{\hat z_0}$ at any value $\eta$, thus completing the proof.
\end{proof}

\begin{prop}[The decay from below for large blow-up points]\label{prop:slow-decay}
Let $K>C_\infty$ be given, where $C_\infty>0$ was defined in the beginning of subsection \ref{ss:beh-infinity}. Then, when $z_0>0$ is large enough, the solution $\varphi_{z_0}$ of the ODE \eqref{eq-self-similar} constructed in Proposition \ref{prop:blow-up} satisfies
\begin{equation}\label{eq:up}
\varphi_{z_0} (z) \geq \frac{K}{z^{\frac{2}{1-m}}},
\end{equation}
for any large $z$.
\end{prop}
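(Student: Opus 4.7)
The plan is to argue by contradiction through a crossing argument against the supersolution $\psi_{\infty,K}(z) := K/z^{2/(1-m)}$, combined with the slope control supplied by Proposition~\ref{prop:far-away}. Assume there exist $K > C_\infty$, a sequence $z_0^n \to +\infty$, and arbitrarily large points $z^n$ with $\varphi_{z_0^n}(z^n) < \psi_{\infty,K}(z^n)$. Since $\varphi_{z_0^n}$ blows up at $z_0^n$ whereas $\psi_{\infty,K}$ is finite there (and since $K > C_\infty$ makes $\psi_{\infty,K}$ a genuine supersolution for $z$ large, the reaction term being negligible), the two decreasing curves necessarily intersect in $(z_0^n, z^n)$. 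Taking the rightmost crossing point $z_c^n$, one obtains $\varphi_{z_0^n}(z_c^n) = \psi_{\infty,K}(z_c^n)$ together with the slope inequality $\varphi_{z_0^n}'(z_c^n) \leq \psi_{\infty,K}'(z_c^n)$.

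The second step is to contradict this slope ordering by a direct computation. Setting $\eta_c^n := \varphi_{z_0^n}(z_c^n)$, which tends to $0$, one finds $|\psi_{\infty,K}'(z_c^n)| = \tfrac{2}{1-m}K^{(m-1)/2}(\eta_c^n)^{(3-m)/2}$. On the other hand, the proof of Proposition~\ref{prop:far-away} actually establishes more than its statement: it shows that $\varphi_{z_0^n}$ is less steep than any fixed reference profile $\varphi_{\hat z_0}$ at every common height, so that $|\varphi_{z_0^n}'(z_c^n)| \leq |\varphi_{\hat z_0}'(\hat z_c^n)|$ at matched heights. The asymptotic behaviour of this bound at small heights scales like $\tfrac{2}{1-m}C_*(\hat z_0)^{(m-1)/2}\eta^{(3-m)/2}$, where $C_*(\hat z_0)$ denotes the asymptotic decay constant of $\varphi_{\hat z_0}$ at infinity. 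Since $(m-1)/2 < 0$, the required inequality is reversed whenever $C_*(\hat z_0) > K$, yielding the contradiction.

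The main obstacle is the apparent circularity in this last step: the reference $\varphi_{\hat z_0}$ must itself decay with constant strictly greater than $K$, which is essentially Proposition~\ref{prop:slow-decay}. I would resolve this by a bootstrapping procedure. The base step is the coarse bound $C_*(z_0) \geq C_\infty$ for every $z_0$, obtained by running the very same crossing argument against the strict subsolution $\psi_\infty$ of subsection~\ref{ss:beh-infinity}; here strictness of $\psi_\infty$ as a subsolution is what produces the ordering. One then iterates, using successively larger reference blow-up points $\hat z_0$ to upgrade the implicit decay constant. The key mechanism that makes the iteration converge to arbitrarily large decay constants is the divergence of the blow-up constant $C_0(z_0)$ of Proposition~\ref{prop:blow-up} as $z_0 \to +\infty$, which forces more and more mass into the tail of $\varphi_{z_0}$ and eventually pushes its decay constant past any prescribed $K > C_\infty$, provided $z_0$ is sufficiently large.
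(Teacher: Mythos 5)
Your strategy has the right spirit — you correctly extract from the proof of Proposition~\ref{prop:far-away} the uniform ``less steep than a fixed reference'' bound at all common heights, and you correctly identify the slope inequality at a crossing with the supersolution $\psi_{\infty,K}$ as the object to contradict. But the core step is built on an assumption that is not available at this stage of the construction. You posit that the reference profile $\varphi_{\hat z_0}$ decays at infinity like $C_*(\hat z_0)\,z^{-2/(1-m)}$ and then play with the constant $C_*(\hat z_0)$. For a generic blow-up point $\hat z_0$, however, no such asymptotics is known; the whole purpose of subsection~4.3 is to locate the single value $z^*$ for which this two-sided decay holds. For other $z_0$ the tail may decay with a strictly faster rate (Proposition~\ref{prop:fast-decay} shows exactly this for small $z_0$, where $\varphi_{z_0}$ falls below a shifted $\psi_{\infty,C_\infty}$ and could have an entirely different, faster profile), so the quantity $C_*(\hat z_0)$ is ill-defined, and the scaling $|\varphi_{\hat z_0}'|\sim \tfrac{2}{1-m}C_*^{(m-1)/2}\eta^{(3-m)/2}$ that drives your comparison cannot be invoked.

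The bootstrapping you propose to repair the circularity does not close it. The base step asserting $C_*(z_0)\ge C_\infty$ for every $z_0$ via the crossing argument against $\psi_\infty$ is wrong in direction: $\psi_\infty$ is a \emph{sub}solution, so by Proposition~\ref{prop:ode_comp} a crossing implies $\varphi_{z_0}\le\psi_\infty$ on the right, which is an \emph{upper} bound (indeed, the faster decay of Proposition~\ref{prop:fast-decay}), not the lower bound you need. And the iteration step rests on the unproved assertion that enlarging the blow-up point (hence $C_0(z_0)$, the blow-up amplitude of \eqref{def:C0}) must proportionally heavy-up the tail; that link from blow-up constant to decay constant is precisely the content of the proposition being proved, so the circularity survives the bootstrap. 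The paper's proof sidesteps the need to know any decay asymptotics of the reference function by integrating the ODE over the strip where $\varphi_{z_0}$ runs between the two fixed levels $\eta_K$ and $\eta_K/2$: the steepness hypothesis forces this strip to have bounded width, so as $z_0\to+\infty$ the convective term $z_1\eta_K$ in the integral identity blows up while $\varphi_{z_0}'(z_1)$ stays bounded by Proposition~\ref{prop:far-away}, forcing $\varphi_{z_0}'(z_2)\to+\infty$ and a contradiction with monotonicity. That integral argument is the missing idea; without it, or without an independent way to pin down the tail of $\varphi_{\hat z_0}$, your proof does not go through.
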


\begin{proof} 
We recall that, since $K>C_\infty$,  the function $\psi_{\infty,K}(z)=\frac{K}{z^{\frac{2}{1-m}}}$ is a supersolution of the ODE \eqref{eq-self-similar} when it is small enough, say less than some value $\eta_K$. Our goal is to show that $\varphi_{z_0}$ is ``less steep'' than $\psi_{\infty,K}$ around some level set, which will lead to the wanted conclusion by Proposition~\ref{prop:ode_comp}.

Consider the points $z_1$ and $z_2$ such that $\varphi_{z_0} (z_1)  = \eta_K$ and $\varphi_{z_0} (z_2) = \frac{\eta_K}{2}$. As pointed out earlier, such points necessarily exist. Up to increasing~$z_0$, obviously~$z_1$ and $z_2$ can be made arbitrarily large so that $\psi_{\infty,K} <\varphi_{z_0}$ in $(z_1,z_2)$.

Now consider $z \in [z_1,z_2]$, and $\eta \in [\frac{\eta_K}{2},\eta_K]$ such that $\varphi_{z_0} (z ) = \eta$. We can find $S>0$ (depending on $z$) such that $\psi_{\infty,K} (z - S) = \eta$. If there is $z$ such that
$$
\varphi_{z_0} ' (z) > \psi_{\infty,K} '(z - S),
$$
then by Proposition~\ref{prop:ode_comp} we get the wanted inequality.

Let us thus assume the opposite inequality for all $z \in [z_1,z_2]$. In particular, we have that
$$
\max_{z\in[z_1,z_2]} \varphi_{z_0} '(z) \leq \max \left\{ \psi_{\infty,K} ' (y) :  \frac{\eta_K}{2} \leq \psi_{\infty,K} (y) \leq \eta_K \right\} <  0.
$$
Notice that this above bound does not depend on $z_0$, and thus it follows that the length $z_2 - z_1$ does not go to $+\infty$ as $z_0 \to +\infty$. Now integrating~\eqref{eq-self-similar} between $z_1$ and $z_2$, we find that
$$(\varphi_{z_0}^m)' (z_2) - (\varphi_{z_0}^m)' (z_1) = -r \int_{z_1}^{z_2} \varphi_{z_0}^\beta (s) ds - \frac{2 - \beta +m}{2(\beta-1)}  \int_{z_1}^{z_2} \varphi_{z_0} (s) ds - \frac{\beta -m}{2(\beta-1)} \left(z_2  \frac{\eta_K}{2} - z_1 \eta_K\right).$$
Then, as $z_0 \to +\infty$, we get
$$(\varphi_{z_0}^m)' (z_2) - (\varphi_{z_0}^m)' (z_1) = z_1 \frac{\beta -m}{2 (\beta -1)} \frac{\eta_K}{2} +O(1) .$$
Since $z_1 \geq z_0$, clearly the right hand side goes to $+\infty$ as $z_0 \to +\infty$. Moreover, due to Proposition~\ref{prop:far-away}, we also know that $\varphi_{z_0} ' (z_1)$ has to remain bounded as $z_0$ is increased. Therefore, we find that $\varphi_{z_0} ' (z_2) \to +\infty$, which obviously is a contradiction.
\end{proof}
\begin{rem}\label{rem:up3}
The above proof relies on the fact that $\psi_{\infty,K}$ is a supersolution. Therefore in the regime $m+\frac 2 \alpha \leq \beta <2-m$, $\frac{1}{1-m}<\alpha<\frac{2}{1-m}$, we select some $\frac{2}{\beta-m}<\rho<\frac{2 \beta}{\beta-m}$ and  \eqref{eq:up} remains true when replacing the right hand term by 
$$\frac{A}{z^{\frac{2}{\beta-m}}} + \frac B{z^{\rho}},$$
where $A,B>0$, due to the fact it is also a supersolution (on a right half line). This means that, for any $z_0$ large enough, the function $\varphi_{z_0}$ already provides a self-similar solution as required in Remark~\ref{rem:up2} (notice that, in the critical case $m + \frac 2 \alpha = \beta$, one needs $A \geq \overline{C}$ and therefore the choice of $z_0$ depends on the initial data). This in turn justifies Remark~\ref{rem:up1}.
\end{rem}
We next investigate the decay of $\varphi _{z_0}$ when the blow-up point $z_0$ is small.

\begin{prop}[The decay from above for small blow-up points]\label{prop:fast-decay}
When $z_0>0$ is small enough, the solution $\varphi_{z_0}$ of the ODE \eqref{eq-self-similar} constructed in Proposition \ref{prop:blow-up} satisfies
$$
\varphi_{z_0} (z)\leq \psi_\infty(z)=\frac{C_\infty}{(z+Z)^{\frac{2}{1-m}}},
$$
on a right half line, where the shift $Z>0$ was selected in the beginning of subsection \ref{ss:beh-infinity}.
\end{prop}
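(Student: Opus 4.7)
The plan is to mimic Proposition~\ref{prop:slow-decay} but with the roles reversed: since $\psi_{\infty}$ is a \emph{strict} subsolution which does \emph{not} blow up anywhere in $(-Z,+\infty)$, a single strict comparison close to the blow-up point $z_{0}$ will propagate to the entire right half-line, thanks to Proposition~\ref{prop:ode_comp_blowup} and to the singularity of $\varphi_{z_{0}}$ at $z_{0}$.

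\textbf{Step 1: strict separation near $z_{0}$.} I would work at the right endpoint $z^{*}:=(1+\kappa)z_{0}$ of the enlarged blow-up interval from~\eqref{enlarged}. By that estimate and continuity,
$$
\varphi_{z_{0}}(z^{*}) \leq \psi_{0,K_{0}}(z^{*}) = \frac{K_{0}}{(\kappa z_{0})^{\frac{1}{\beta-1}}} = \gamma \left(\frac{\beta-m}{2\kappa r(\beta-1)^{2}}\right)^{\frac{1}{\beta-1}} =: V,
$$
a constant independent of $z_{0}$. Crucially, $V$ is exactly the quantity appearing in the right-hand side of \eqref{gamma1}, where the shift $Z$ was chosen precisely so that $\psi_{\infty}(0)>V$. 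Since $z^{*}\to 0$ as $z_{0}\to 0$ and $\psi_{\infty}$ is continuous, this yields $\psi_{\infty}(z^{*})>V\geq\varphi_{z_{0}}(z^{*})$ for every sufficiently small $z_{0}>0$.

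\textbf{Step 2: propagation to the right.} Suppose for contradiction that $\varphi_{z_{0}}(z_{1})>\psi_{\infty}(z_{1})$ for some $z_{1}>z^{*}$, and let $z_{c}\in(z^{*},z_{1}]$ be the smallest point where $\varphi_{z_{0}}(z_{c})=\psi_{\infty}(z_{c})$; then $\psi_{\infty}>\varphi_{z_{0}}$ on $(z^{*},z_{c})$. Because $\psi_{\infty}-\varphi_{z_{0}}$ is nonnegative on $(z^{*},z_{c})$ and vanishes at $z_{c}$, its derivative at $z_{c}$ is nonpositive, so $\psi_{\infty}'(z_{c})\leq\varphi_{z_{0}}'(z_{c})$. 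Applying Proposition~\ref{prop:ode_comp_blowup} with $\psi_{1}=\psi_{\infty}$ (a decreasing strict subsolution defined on $(-Z,+\infty)$) and $\psi_{2}=\varphi_{z_{0}}$ (a decreasing supersolution blowing up at $Z_{2}=z_{0}$), we conclude that either $\psi_{\infty}\equiv\varphi_{z_{0}}$ on $(z_{0},z_{c})$, or $\psi_{\infty}$ blows up at some $Z_{1}>z_{0}$. Both are impossible: $\psi_{\infty}(z_{0})=C_{\infty}/(z_{0}+Z)^{\frac{2}{1-m}}$ is finite whereas $\varphi_{z_{0}}(z)\to+\infty$ as $z\to z_{0}^{+}$, and $\psi_{\infty}$ is smooth on $(-Z,+\infty)$ with no blow-up at all. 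This contradiction yields $\varphi_{z_{0}}\leq\psi_{\infty}$ on the right half-line $[z^{*},+\infty)$, as claimed.

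\textbf{Main obstacle.} The delicate point is Step~1: one must identify the correct benchmark point $z^{*}$ and match the explicit constant $V$ coming from the blow-up control with the value $\psi_{\infty}(0)$, for which the shift $Z$ was tuned in \eqref{gamma1}; without this calibration one cannot initiate the comparison. Once strict separation at $z^{*}$ is in hand, Step~2 is virtually automatic, since $\psi_{\infty}$ carries no singularity that could pair with the blow-up of $\varphi_{z_{0}}$ at $z_{0}$, ruling out any crossing by the blow-up ordering principle.
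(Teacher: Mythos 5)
Your proof is correct and takes essentially the same approach as the paper: the key Step~1 calibration of the constant $V = \gamma\left(\frac{\beta-m}{2\kappa r(\beta-1)^2}\right)^{\frac{1}{\beta-1}}$ against $\psi_\infty(0)$ via the shift $Z$ chosen in \eqref{gamma1} is exactly the paper's argument. The only (inessential) difference is in the concluding comparison: you argue by contradiction using the blow-up ordering of Proposition~\ref{prop:ode_comp_blowup} to rule out a crossing to the right of $(1+\kappa)z_0$, whereas the paper observes that the blow-up of $\varphi_{z_0}$ at $z_0$ forces a crossing inside $(z_0,(1+\kappa)z_0)$ and then applies Proposition~\ref{prop:ode_comp} directly there.
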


\begin{proof} In view of Proposition \ref{prop:blow-up}, \eqref{enlarged}, \eqref{kappa} and \eqref{def:C0}, we see that
$$
\varphi_{z_0} ((1+\kappa)z_0 ) \leq \gamma \left(\frac{\beta-m}{2 \kappa r(\beta-1)^2}\right)^{\frac{1}{\beta-1}}<\psi_\infty(0).
$$
The last inequality follows from our choice of the shift $Z$, see \eqref{gamma1}. Hence, if  $z_0$ is small enough, this means that $\varphi_{z_0}$ crosses $\psi_\infty$ and, by Proposition~\ref{prop:ode_comp} (and if necessary by continuity of solutions w.r.t. boundary conditions, see Remark~\ref{rem:comp1}), the function $\varphi _{z_0}$ has to remain below $\psi_\infty$ on the right of this intersection. The proposition is proved.
\end{proof}

\subsection{A matching argument}

The above Propositions~\ref{prop:slow-decay} and~\ref{prop:fast-decay} lead us to introduce the real number
$$
z^* := \sup\, \{ z_0 > 0 : \varphi_{z_0} \leq \psi_\infty \ \mbox{ somewhere} \}.
$$
Indeed, we already know from Proposition \ref{prop:fast-decay} that the above set contains small enough $z_0$ and thus is not empty. On the other hand, if $\varphi_{z_0}$ ``crosses" $\psi_\infty$ somewhere then, by Proposition~\ref{prop:ode_comp}, we have that $\varphi_{z_0} \leq \psi_\infty$ on the right of this contact point, and the same conclusion holds if $\varphi_{z_0}$ \lq\lq touches with the same slope'' $\psi_\infty$ somewhere, in view of Remark \ref{rem:comp1}. In particular, when $z_0$ is large it would contradict the estimate from below of the decay in Proposition~\ref{prop:slow-decay}. In other words,  the above set is bounded from above. Thus $z^*$ does exist.

Our goal is now to show that $\varphi_{z^*}$ has the expected behaviour. As a matter of fact, we will only show that there exists a solution blowing up at $z^*$ with the wanted asymptotics both at its blow-up point and at infinity. The reason is that we lack uniqueness of the solution blowing up \lq\lq correctly'' at any $z_0$, and therefore we also lack continuity of $\varphi_{z_0}$ with respect to $z_0$. This makes the last part of the proof much harder.

\medskip

First, by the definition of $z^*$, we can find a sequence $z_n \searrow z^*$ such that $\varphi_{z_n}$ is above $\psi_\infty$. Passing to the limit as $n \to +\infty$ (by usual estimates), we find a new nonincreasing solution~$\varphi_{1,\infty}$. 
We know from Proposition \ref{prop:blow-up} that the interval of validity of estimate \eqref{validity} for $\varphi_{z_n}$ is $(z_n,\min(z_n+\delta(z_n),(1+\kappa)z_n))$ and that $\liminf _{n\to+\infty}\delta(z_n)>0$. As a result, we collect for $\varphi_{1,\infty}$ an asymptotics of the form
\begin{equation}\label{bup-wanted}
\frac{k_0}{(z-z^*)^{\frac{1}{\beta-1}}} \leq \varphi_{1,\infty} (z) \leq \frac{K_0}{(z-z^*)^{\frac{1}{\beta-1}}}
\end{equation}
on a neighborhood of $z^*$. As before, from the fact that $\varphi_{1,\infty}$ blows up, and since a solution of~\eqref{eq-self-similar} cannot reach a local minimum, one deduces that $\varphi_{1,\infty}$ is a decreasing function.

Moreover, by construction we have, for any $k<C_\infty$, 
$$
\varphi_{1,\infty} (z)\geq \psi_\infty(z)=\frac{C_\infty}{(z+Z)^{\frac{2}{1-m}}}\geq \frac{k}{z^{\frac{2}{1-m}}},
$$
where the first inequality holds for all $z > z^*$ and the second, which is the wanted lower estimate at infinity, holds for $z$ large enough. If, for some $K>C_\infty$, we have the wanted upper estimate, namely $\varphi_{1,\infty}(z)\leq \frac{K}{z^{\frac{2}{1-m}}}$ for $z$ large enough, then we are done. If not, then, $z\mapsto \frac{K}{z^{\frac{2}{1-m}}}$ being a supersolution, we see by another application of Proposition~\ref{prop:ode_comp} (see also Remark~\ref{rem:comp1}) that, for any $K>C_\infty$,
\begin{equation}\label{rate}
\varphi_{1,\infty}(z) \geq \frac{K}{z^{\frac{2}{1-m}}} \quad \text{ for all $z$ large enough}
\end{equation}
must hold.

Now take $\tilde{z}_n \nearrow z^*$ such that $\varphi_{\tilde{z}_n}$ touches $\psi_\infty$. As before, we can extract a converging subsequence to some $\varphi_{2,\infty}$, which is decreasing and also blows up at $z^*$ with the same asymptotics as in \eqref{bup-wanted}.

Next, observe that $\varphi_{1,\infty}$ is above $\varphi_{\tilde z_n}$ in neighborhoods of $z^*$ (where the former blows up) and of $+\infty$ (since we know that $\varphi_{1,\infty}> \psi_\infty$ from \eqref{rate} and $\psi _{\tilde z_n}\leq \psi_\infty$ as explained just after the definition of $z^*$ above). Hence, if  $\varphi_{\tilde{z}_n}$ and $\varphi_{1,\infty}$ intersect, they have to intersect twice which contradicts Proposition~\ref{prop:ode_comp} and continuity of solutions w.r.t. boundary conditions. As a result $\varphi_{\tilde z_n}\leq \varphi_{1,\infty}$ and thus, passing to the limit, we infer that
$$
\varphi_{2,\infty} \leq \varphi_{1,\infty}.
$$

\begin{cla}
If $\varphi_{2,\infty}$ lies above $\psi_\infty$, then it satisfies the wanted properties: for any $k<C_\infty$ and some $K> C_\infty$, we have that
$$
\frac{k}{z^{\frac{2}{1-m}}}\leq \varphi_{2,\infty} (z) \leq \frac{K}{z^{\frac{2}{1-m}}}\quad \text{ for $z$ large enough}.
$$
\end{cla}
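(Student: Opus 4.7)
This is immediate from the hypothesis. Since $\varphi_{2,\infty}(z)\geq \psi_\infty(z)=C_\infty/(z+Z)^{2/(1-m)}$ and $\psi_\infty(z)\,z^{2/(1-m)}\to C_\infty$ as $z\to+\infty$, for any $k<C_\infty$ we have $\psi_\infty(z)\geq k/z^{2/(1-m)}$ on a right half-line, and the same lower bound is inherited by $\varphi_{2,\infty}$.

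\textbf{Upper bound: the strategy.} Fix any $K>C_\infty$, so that the function $\psi_{\infty,K}(z):=K/z^{2/(1-m)}$ is a supersolution of \eqref{eq-self-similar} on a right half-line, and observe $\psi_\infty<\psi_{\infty,K}$ everywhere. By construction each $\varphi_{\tilde z_n}$ touches $\psi_\infty$ at some point $\hat z_n$ with $\varphi_{\tilde z_n}\leq \psi_\infty<\psi_{\infty,K}$ on $(\hat z_n,+\infty)$. Since $\varphi_{\tilde z_n}$ blows up at $\tilde z_n<\hat z_n$, it must cross $\psi_{\infty,K}$ at some first point $z_n^\dagger\in(\tilde z_n,\hat z_n)$; a sliding argument in the spirit of Proposition~\ref{prop:ode_comp} then yields $\varphi_{\tilde z_n}(z)\leq \psi_{\infty,K}(z)$ for all $z\geq z_n^\dagger$. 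Passing to the limit $n\to+\infty$, one concludes $\varphi_{2,\infty}(z)\leq K/z^{2/(1-m)}$ for $z\geq \limsup z_n^\dagger$, provided this quantity is finite.

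\textbf{Why $\hat z_n$ must escape to infinity.} As a preliminary, observe that, under the hypothesis $\varphi_{2,\infty}\geq \psi_\infty$, the touching points $\hat z_n$ tend to $+\infty$. Indeed, otherwise up to a subsequence $\hat z_n\to\hat z_\infty<+\infty$, and passing to the limit $\varphi_{2,\infty}(\hat z_\infty)=\psi_\infty(\hat z_\infty)$. Combined with the hypothesis, both functions are tangent at $\hat z_\infty$, and the degenerate version of Proposition~\ref{prop:ode_comp} (Remark~\ref{rem:comp1}) forces $\varphi_{2,\infty}\leq \psi_\infty$ on $(\hat z_\infty,+\infty)$, hence $\varphi_{2,\infty}\equiv \psi_\infty$ there. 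This is impossible, since $\psi_\infty$ is a strict subsolution of the ODE while $\varphi_{2,\infty}$ is an actual solution.

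\textbf{Main obstacle.} The delicate point is then the uniform boundedness of $z_n^\dagger$. My plan is to exploit the uniform blow-up profile of $\varphi_{\tilde z_n}$ near $\tilde z_n\to z^*$ given by Proposition~\ref{prop:blow-up}: the identity $\varphi_{\tilde z_n}(z_n^\dagger)=K/(z_n^\dagger)^{2/(1-m)}$ pins the value at the crossing, and comparing with the uniform lower blow-up bound $k_0/(z-\tilde z_n)^{1/(\beta-1)}$, valid on a uniform right neighborhood of $\tilde z_n$, should confine $z_n^\dagger$ to an a priori bounded interval depending only on $K$, $k_0$, $m$, $\beta$ and $z^*$. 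The alternative scenario $z_n^\dagger\to+\infty$ would force $\varphi_{2,\infty}\geq \psi_{\infty,K}$ everywhere and for every $K>C_\infty$, which, refined through a tangential analysis inherited from the touching of $\varphi_{\tilde z_n}$ with $\psi_\infty$, would contradict the construction of the sequence $\tilde z_n\nearrow z^*$. This tension between the \emph{tangential touching} at infinity and the \emph{slow-decay alternative} is the crux of the proof.
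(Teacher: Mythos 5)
Your lower bound is fine, and the preliminary observation that the touching points $\hat z_n$ must escape to $+\infty$ is correct. But the main step — the uniform bound on $z_n^\dagger$ — is exactly what the argument cannot afford to leave as a sketch, and the sketch you give would not work.

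Two specific issues. First, the comparison you propose at $z_n^\dagger$ runs in the wrong direction: Proposition~\ref{prop:blow-up} gives a \emph{lower} bound $\varphi_{\tilde z_n}(z)\geq k_0/(z-\tilde z_n)^{1/(\beta-1)}$ near $\tilde z_n$, but a lower bound on $\varphi_{\tilde z_n}$ cannot by itself pin down an upper bound for the location where it falls below $\psi_{\infty,K}$; moreover that estimate is only valid on a right neighbourhood of $\tilde z_n$ of \emph{bounded} width $\delta(\tilde z_n)$, so it says nothing if $z_n^\dagger$ is large. Second, even once a crossing $z_n^\dagger$ is found, the sliding argument that would yield $\varphi_{\tilde z_n}\leq\psi_{\infty,K}$ on $(z_n^\dagger,+\infty)$ requires $\psi_{\infty,K}$ to be a supersolution at $z_n^\dagger$, i.e.\ the common value there must already be small; without an a priori lower bound on $z_n^\dagger$ (away from $\tilde z_n$), this cannot be guaranteed. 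Your final remark — that $z_n^\dagger\to+\infty$ would force $\varphi_{2,\infty}\geq\psi_{\infty,K}$ everywhere and should then be ``refined through a tangential analysis'' — is precisely the nontrivial content of the claim, not a step you get to assume. The paper resolves it by working at the \emph{infinity} end rather than near the blow-up: assuming slow decay, one produces a crossing of $\varphi_{2,\infty}$ with the \emph{shifted} supersolution $\psi_{\infty,K}(\cdot-Z)$ and then uses the strict differential inequality of $\psi_{\infty,K}$ to make the slope inequality at that crossing strict; by continuity this transfers to $\varphi_{\tilde z_n}$ for $n$ large, whence Proposition~\ref{prop:ode_comp} forces $\varphi_{\tilde z_n}$ above the shifted $\psi_{\infty,K}$ on a right half-line, contradicting $\varphi_{\tilde z_n}\leq\psi_\infty$ there. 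That final comparison takes place where everything is small, so the supersolution property is available, and the strictness obtained from the differential inequality is what lets the contradiction survive passing to the $n$-th approximant. Your write-up stops exactly where this work needs to happen.
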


\begin{proof} We only need to prove the upper bound. We proceed by contradiction: assume that for any $K>C_\infty$, there is a sequence $y_n \to +\infty$ such that $\varphi_{2,\infty}(y_n)>\frac{K}{y_n^{\frac{2}{1-m}}}= \psi_{\infty,K}(y_n)$. From Proposition~\ref{prop:ode_comp} and Remark~\ref{rem:comp1}, (recall that the $\psi_{\infty,K}$ are supersolutions), this implies that
\begin{equation}
\label{large-enough}
\varphi_{2,\infty} (z) \geq \frac{K}{z^{\frac{2}{1-m}}} \quad \text{ for $z$ large enough.}
\end{equation}

We shift $\psi_{\infty,K}$ to the right to get an intersection point, in the range where it is a supersolution. Since \eqref{large-enough} holds for any $K>C_\infty$,  $\varphi_{2,\infty}$ has to lie above any shift of $\psi_{\infty,K}$ at infinity. Therefore, we find a point $z_1$ where
$$
\varphi_{2,\infty}(z_1)=\psi_{\infty,K}(z_1-Z), \quad \varphi_{2,\infty} ' (z_1) \geq \psi_{\infty,K} ' (z_1 -Z).
$$
Here $Z$ is the shift chosen above. Due to the differential inequality satisfied by $\psi_{\infty,K}$, up to slightly changing $Z$ and $z_1$, the slope inequality is even a strict inequality. In particular, we find for $n$ large enough that there exists $z_{1,n}$ such that
$$\varphi_{\tilde{z}_n} (z_{1,n}) = \psi_{\infty,K} (z_{1,n} - Z), \quad \varphi_{\tilde{z}_n} ' (z_{1,n}) > \psi_{\infty,K}' (z_{1,n} - Z).$$
By Proposition~\ref{prop:ode_comp}, $\varphi_{\tilde{z}_n}$ has to lie above the shifted $\psi_{\infty,K}$ on a right half line, contradicting the fact that it crosses $\psi_\infty$ (and thus remains below it, also by Proposition~\ref{prop:ode_comp}). The claim is proved.
\end{proof}

We are left with the case when neither $\varphi_{1,\infty}$ nor $\varphi_{2,\infty}$ satisfy the wanted properties. According to the above arguments, this means that $\varphi_{2,\infty} \leq \not \equiv \psi_\infty$ on a right half-line, and that $\varphi_{1,\infty}$  has \lq\lq slow'' decay at infinity, that is \eqref{rate} holds. Moreover, $\varphi_{2,\infty} < \varphi_{1,\infty}$ by the strong maximum principle.

Another continuation approach is needed. Consider $K>C_\infty$ and a level set $\eta$, so that $\psi_{\infty,K}$ is a supersolution when less than $\eta$. Now, for $0\leq \theta \leq 1$, consider
$$ 
z_\theta:= (1-\theta) z_2 + \theta z_1,
$$
 where $z_2<z_1$ are the points where $\varphi_{2,\infty}$ and $\varphi_{1,\infty}$ respectively take the value $\eta$.

Next consider $\varphi_\xi$ the solution of the ODE \eqref{eq-self-similar} with boundary conditions
$$\varphi (z_\theta) = \eta , \quad \varphi ' (z_\theta) = - \xi.$$
For any $0\leq \theta\leq 1$, we claim that there exists a unique $\xi$ such that the solution $\varphi_\xi$ is decreasing and satisfies
$$\varphi_{2,\infty} \leq \varphi_{\xi} \leq \varphi_{1,\infty},$$
on $(z^*,z_\theta]$. The arguments for the existence of such $\xi$ are similar to those of subsection~\ref{ss:bup} (see in particular the proof of Claim~\ref{claim1}) and we only sketch them: as $\xi$ increases or decreases, the solution intersects $\varphi_{1,\infty}$ or $\varphi_{2,\infty}$; such an intersection has to be unique, meaning the solutions cannot intersect twice; take the largest $\xi$ so that the solution remains below $\varphi_{1,\infty}$, and one can then find that the solution has to remain between $\varphi_{1,\infty}$ and $\varphi_{2,\infty}$ as announced. Monotonicity follows as before from the impossibility of a local minimum. As for the uniqueness of such a $\xi$, it follows from the fact that two solutions cannot intersect and have the same blow-up point, see Proposition~\ref{prop:ode_comp_blowup}.

We denote by $\xi_\theta$ such a $\xi$. Thanks to the uniqueness, one can check that it is a continuous function of~$\theta$. Now take 
$$
\theta^* := \inf \{ \theta \in [0,1]: \varphi_{\xi_\theta} \geq \psi_\infty \}.
$$
Since $\varphi_{1,\infty}$ is above $\psi_\infty$, the above set contains $\theta=1$ and thus $\theta ^{*}$ is well defined. Since $\varphi_{2,\infty}$ crosses $\psi_\infty$, the above set does not contain $\theta =0$ and thus, by continuity with respect to $\theta$, we have that $\theta ^{*}>0$.

Finally, let us check that $\varphi_{\xi_{\theta^*}}$ satisfies all the wanted properties. There is no issue as far as blow-up is concerned, as it lies between $\varphi_{1,\infty}$ and $\varphi_{2,\infty}$. We also know, by continuity, that $\varphi_{\xi_{\theta^*}} \geq \psi_\infty$. Thus we only need to check that it does not have \lq\lq slow'' decay. Proceed by contradiction and assume that, for any $K>C_\infty$,
$$
\varphi_{\xi_{\theta^*}}(z) > \psi_{\infty,K}(z) \quad \text{ for $z$ large enough}.
$$
We claim that $\varphi_{\xi_{\theta^*}}$ is steeper than $\psi_{\infty,K}$, in the sense that its derivative is lower on any given (small, so that differential inequalities are available) level set. Precisely, select $\eta$ small enough so that $\psi_{\infty,K}$ is a supersolution when smaller than $\eta$. Select $z_2<z_1$ such that $\varphi_{\xi_{\theta^{*}}}(z_1)=\psi_{\infty,K}(z_2)=\eta$ and assume, by contradiction, that $\varphi_{\xi_{\theta^{*}}}'(z_1)>\psi_{\infty,K}'(z_2)$. By continuity we have, for a small $\ep>0$, the same equality and inequality between $\varphi _{\xi_{\theta ^{*} - \ep}}$ at a point $z_1^{\ep}$ and $\psi_{\infty,K}$ at point $z_2$. From Proposition~\ref{prop:ode_comp}, we infer that $\varphi_{\xi_{\theta ^{*} - \ep}}$ has to lie above the shifted $\psi_{\infty,K}(z-(z_1^{\ep}-z_2))$ for large~$z$, which contradicts the fact that it has to cross and then stay below $\psi _\infty$ for large $z$. Hence, as announced, $\varphi_{\xi_{\theta^{*}}}$ is steeper than $\psi_{\infty,K}$. 

From this steepness information, we deduce that $\varphi_{\xi_{\theta^{*}}}$ has to stay below another \lq\lq critical shift'' of the supersolution $\psi_{\infty,K}$, and thus below the (non shifted) $\psi_{\infty,K+1}$, which is a contradiction. We conclude that  $\varphi_{\xi_{\theta^*}}$ satisfies the wanted asymptotics.

\medskip

The proof is (almost) complete: it actually remains to estimate the derivatives of the constructed solution, which we now denote by $\varphi$, as $z \to +\infty$. We know that $\varphi$ solves \eqref{eq-self-similar} together with the estimate \eqref{self-infini} at $+\infty$. Since $z\varphi'(z)=(z\varphi(z))'-\varphi(z) $, equation \eqref{eq-self-similar} is recast
\begin{equation}\label{manip}
-(\varphi ^{m})''(z)=c_1 \varphi(z)+r\varphi^{\beta}(z)+c_2(z\varphi(z))'
\end{equation}
where $c_1:=\frac{2-\beta+m}{2(\beta-1)}>0$, $c_2:=\frac{\beta-m}{2(\beta-1)}>0$. From estimate \eqref{self-infini}, the first two terms in the right hand side are integrable at $+\infty$, and $z\varphi(z)\to 0$ as $+\infty$. As a result,  $(\varphi^{m})'(z)$ must have a finite limit as $z\to+\infty$ which must be zero (if not, since $\varphi$ is decreasing, then $\varphi^{m}$ becomes negative at infinity). Hence we get
$$
0 > m\varphi ^{m-1}(z)\varphi'(z)=(\varphi^{m})'(z)=\int _z^{+\infty}(c_1\varphi(s)+r\varphi^{\beta}(s))ds-c_2z\varphi(z)\geq -c_2z\varphi(z).
$$
Using again \eqref{self-infini} we see that the above yields the estimate on $\varphi'$ in \eqref{to-be-proved}, whereas the estimate on $\varphi''$ then directly follows from the ODE \eqref{eq-self-similar} and the estimates on $\varphi$ and $\varphi '$. \qed

\bigskip

\noindent{\bf Acknowledgements.} M. Alfaro is supported by the 
ANR I-SITE MUSE, project MICHEL 170544IA (n$^{o}$ ANR-IDEX-0006). T. Giletti is supported by the NONLOCAL project (n$^{o}$ ANR-14-CE25-0013).

\bibliographystyle{siam}    
\bibliography{biblio2}

 \end{document}